 \newtheorem{thm}{Theorem}[section]
 \newtheorem{lem}[thm]{Lemma}
 \theoremstyle{definition}
 \newtheorem{defn}[thm]{Definition}
 \theoremstyle{remark}
 \newtheorem{rem}[thm]{Remark}
 \numberwithin{equation}{section}
\newcommand{\comment}[1]{}
\begin{document}

%-------------------------------------------------------------------------
% editorial commands: to be inserted by the editorial office
%
%\firstpage{1} \volume{228} \Copyrightyear{2004} \DOI{003-0001}
%
%
%\seriesextra{Just an add-on}
%\seriesextraline{This is the Concrete Title of this Book\br H.E. R and S.T.C. W, Eds.}
%
% for journals:
%
%\firstpage{1}
%\issuenumber{1}
%\Volumeandyear{1 (2004)}
%\Copyrightyear{2004}
%\DOI{003-xxxx-y}
%\Signet
%\commby{inhouse}
%\submitted{March 14, 2003}
%\received{March 16, 2000}
%\revised{June 1, 2000}
%\accepted{July 22, 2000}
%
%
%
%---------------------------------------------------------------------------
%Insert here the title, affiliations and abstract:
%

\title[Newtonian and single layer potentials for the Stokes system]
{Newtonian and single layer potentials for the Stokes system with $L^{\infty}$ coefficients and the exterior Dirichlet problem}

\author[M. Kohr]{Mirela Kohr}
\address{%
Faculty of Mathematics and Computer Science, Babe\c{s}-Bolyai University,\\
1 M. Kog\u alniceanu Str.,
400084 Cluj-Napoca, Romania}

\email{mkohr@math.ubbcluj.ro}

\thanks{The authors acknowledge the support of the grant EP/M013545/1: "Mathematical Analysis of Boundary-Domain Integral Equations for Nonlinear PDEs" from the EPSRC, UK}

%----------Author 2
\author[S. E. Mikhailov]{Sergey E. Mikhailov}
\address{Department of Mathematics, Brunel University London,\\
             Uxbridge, UB8 3PH, United Kingdom}

\email{sergey.mikhailov@brunel.ac.uk}

%----------Author 3
\author[W. L. Wendland]{Wolfgang L. Wendland}
\address{Institut f\"ur Angewandte Analysis und Numerische Simulation, Universit\"at Stuttgart,\\
Pfaffenwaldring, 57,
70569 Stuttgart, Germany}

\email{wendland@mathematik.uni-stuttgart.de}

\thanks{}

%----------classification, keywords, date
\subjclass{Primary 35J25, 35Q35, 42B20, 46E35; Secondary 76D, 76M}

\keywords{Stokes system with $L^{\infty}$ coefficients, Newtonian and layer potentials, variational approach, inf-sup condition, Sobolev spaces.}

\date{January 1, 2004}
%----------additions
\dedicatory{Dedicated to Professor H. Begehr on the occasion of his $80$th birthday.}
%%% ----------------------------------------------------------------------

\begin{abstract}
A mixed variational formulation of some problems in $L^2$-based Sobolev spaces is used to define the Newtonian and layer potentials for the Stokes system with $L^{\infty}$ coefficients on Lipschitz domains in ${\mathbb R}^3$.
Then the solution of the exterior Dirichlet problem for the Stokes system with $L^{\infty}$ coefficients is presented in terms of these potentials and the inverse of the corresponding single layer operator.
\end{abstract}

%%% ----------------------------------------------------------------------
\maketitle
%%% ----------------------------------------------------------------------
%\tableofcontents
\section{Introduction}
Let 
${\bf u}$ be an unknown vector field, $\pi $ be an unknown scalar field, and ${\bf f}$ be a given vector field defined on an exterior Lipschitz domain $\Omega_{-}\subset {\mathbb R}^3$. Let also ${\mathbb E}({\bf u})$ be the symmetric part of the gradient of ${\bf u}$, $\nabla {\bf u}$. Then the equations
\begin{equation}
\label{Stokes}
\begin{array}{lll}
{\mathcal L}_{\mu }({\bf u},\pi ):={\rm{div}}\left(2\mu {\mathbb E}({\bf u})\right)-\nabla \pi={\bf f},\ {\rm{div}}\ {\bf u}=0 \mbox{ in } \Omega _{-}
\end{array}
\end{equation}
determine the {\it Stokes system} with a known viscosity coefficient $\mu\in L^{\infty}(\Omega_-)$.
This linear PDE system describes the flows of viscous incompressible fluids, when the inertia of such a fluid can be neglected. The {coefficient $\mu$ is} related to the physical properties of the fluid (for further details we refer the reader to the books \cite{Ni-Be} and \cite{Galdi} and the references therein).

The methods of layer potential theory have a main role in the analysis of boundary value problems for elliptic partial differential equations (see, e.g., \cite{Ch-Mi-Na,Co,H-W,12,Med-CVEE,M-W,24}). Fabes, Kenig and Verchota \cite{Fa-Ke-Ve} obtained mapping properties of layer potentials for the constant coefficient Stokes system in $L^p$ spaces.
Mitrea and Wright \cite{M-W} have used various methods of layer potentials in the analysis of the main boundary problems for the Stokes system with constant coefficients in arbitrary Lipschitz domains in ${\mathbb R}^n$. The authors in \cite{K-L-M-W} have obtained mapping properties of the constant coefficient Stokes layer potential operators in standard and weighted Sobolev spaces by exploiting results of singular integral operators. Gatica and Wendland \cite{Ga-We} used the coupling of mixed finite element and boundary integral methods for solving a class of linear and nonlinear elliptic boundary value problems. The authors in \cite{K-L-W} used the Stokes and Brinkman integral layer potentials and a fixed point theorem to show an existence result for a nonlinear Neumann-transmission problem for the Stokes and Brinkman systems with data in $L^p$, Sobolev, and Besov spaces (see also \cite{K-M-W4}). All above results are devoted to elliptic boundary value problems with constant coefficients.

Potential theory plays also a main role in the study of elliptic boundary value problems with variable coefficients. Dindo\u{s} and Mitrea \cite{D-M} have obtained well-posedness results in Sobolev spaces for Poisson problems for the Stokes and Navier-Stokes systems with Dirichlet condition on $C^1$ and Lipschitz domains in compact Riemannian manifolds by using mapping properties of Stokes layer potentials in Sobolev and Besov spaces. 
Chkadua, Mikhailov and Natroshvili \cite{Ch-Mi-Na-3} obtained direct segregated systems of boundary-domain integral equations for a mixed boundary value problem of Dirichlet-Neumann type for a scalar second-order divergent elliptic partial differential equation with a variable coefficient in an exterior domain of ${\mathbb R}^3$ (see also \cite{Ch-Mi-Na}). Hofmann, Mitrea and Morris \cite{H-M-M} considered layer potentials in $L^p$ spaces for elliptic operators of the form $L\!=\!-{\rm{div}}(A\nabla u)$ acting in the upper half-space ${\mathbb R}_{+}^{n}$, $n\geq 3$, or in more general Lipschitz graph domains, with an $L^{\infty }$ coefficient matrix $A$, which is $t$-independent, and solutions of the equation $L u \!=\!0$ satisfy interior De Giorgi-Nash-Moser estimates. They obtained a Calder\'{o}n-Zygmund type theory associated to the layer potentials, and well-posedness results of boundary problems for the operator $L$ in $L^p$ and endpoint spaces.

Our variational approach is inspired by that developed by Sayas and Selgas in \cite{Sa-Se} for the constant coefficient Stokes layer potentials on Lipschitz boundaries, and is based on the technique of N\'{e}d\'{e}lec \cite{Ne}. Girault and Sequeira \cite{Gi-Se} obtained a well-posed result in weighted Sobolev spaces for the Dirichlet problem for the standard Stokes system in exterior Lipschitz domains of ${\mathbb R}^n$, $n=2,3$. B\u{a}cu\c{t}\u{a}, Hassell and Hsiao \cite{B-H} developed a variational approach for the standard Brinkman single layer potential and used it in the analysis of the time dependent exterior Stokes problem with Dirichlet boundary condition in ${\mathbb R}^n$, $n=2,3$. Barton \cite{Barton} constructed layer potentials for {strongly} elliptic differential operators in general settings by using the Lax-Milgram theorem, and generalized various properties of layer potentials for harmonic and second order elliptic equations.
Brewster et al. in \cite{B-M-M-M} have used a variational approach and a deep analysis to obtain well-posedness results for boundary problems of Dirichlet, Neumann and mixed type for higher order divergence-form elliptic equations with $L^{\infty }$ coefficients in locally $(\epsilon,\delta )$-domains and in Besov and Bessel potential spaces.
Choi and Lee \cite{Choi-Lee} have studied the Dirichlet problem for the Stokes system with nonsmooth coefficients, and proved the unique solvability of the problem in Sobolev spaces on a bounded Lipschitz domain $\Omega \subset {\mathbb R}^n$ ($n\geq 3$) with a small Lipschitz constant when the coefficients have vanishing mean oscillations with respect to all variables. Choi and Yang \cite{Choi-Yang} obtained the existence and pointwise bound of the fundamental solution for the Stokes system with measurable coefficients in ${\mathbb R}^n$, $n\geq 3$, whenever the weak solutions of the system are locally H\"{o}lder continuous. Alliot and Amrouche \cite{Al-Am} have used a variational approach to obtain weak solutions for the exterior Stokes problem in weighted Sobolev spaces. Also, Amrouche and Nguyen \cite{Amrouche-1} proved existence and uniqueness results in weighted Sobolev spaces for the Poisson problem with Dirichlet boundary condition for the Navier-Stokes system in exterior Lipschitz domains in ${\mathbb R}^3$.

The purpose of this work is to show the well-posedness result of the Poisson problem of Dirichlet type for the Stokes system with $L^{\infty}$ coefficients in $L^2$-based Sobolev spaces on an exterior Lipschitz domain in ${\mathbb R}^3$. We use a variational approach that reduces this boundary value problem to a mixed variational formulation. A similar variational approach is used to define the Newtonian and layer potentials for the Stokes system with $L^{\infty}$ coefficients on Lipschitz surfaces in ${\mathbb R}^3$, by using the weak solutions of some transmission problems in $L^2$-based Sobolev spaces. Finally, the mapping properties of these layer potentials are used to construct explicitly the solution of the exterior Dirichlet problem for the Stokes system with $L^{\infty}$ coefficients.
The analysis developed in this paper confines to the case $n=3$, due to its practical interest, but the extension to the case $n\geq 3$ can be done with similar arguments.

\section{Functional setting and useful results}
\setcounter{equation}{0}

Let $\Omega _{+}:=\Omega \subset {\mathbb R}^3$ be a bounded Lipschitz domain, i.e., an open connected set whose boundary $\partial {\Omega }$ is locally the graph of a Lipschitz function. Assume that $\partial \Omega $ is connected. Let $\Omega _{-}:={\mathbb R}^3\setminus \overline{{\Omega }}_{+}$ denote the exterior Lipschitz domain. Let $\mathring E_\pm$ denote the operators of extension by zero outside $\Omega_\pm$.

\subsection{\bf Standard $L^2$-based Sobolev spaces and related results}
{Let} ${\mathcal F}$ and ${\mathcal F}^{-1}$ denote the Fourier transform and its inverse defined on the
$L^1(\mathbb R^3)$ functions 
and generalized to the space of tempered distributions ${\mathcal S}^*({\mathbb R}^3)$ (i.e., the topological dual of the space ${\mathcal S}({\mathbb R}^3)$ of all rapidly decreasing infinitely differentiable functions on ${\mathbb R}^3$). The Lebesgue space of (equivalence classes of) measurable, square integrable functions on ${\mathbb R}^3$ is denoted by $L^2({\mathbb R}^3)$, and by $L^{\infty }({\mathbb R}^3)$ we denote the space of (equivalence
classes of) essentially bounded measurable functions on ${\mathbb R}^3$.
Let $H^1({\mathbb R}^3)$ and $H^1({\mathbb R}^3)^3$ denote the $L^2$-based Sobolev (Bessel potential) spaces
\begin{align}
\label{bessel-potential}
&\!H^1({\mathbb R}^3)\!:=\!\big\{f\!\in \!{\mathcal S}^*({\mathbb R}^3):\|f\|_{H^1({\mathbb R}^3)}\!=\!\big\|{\mathcal F}^{-1}[(1\!+\!|\xi |^2)^{\frac{1}{2}}{\mathcal F}f]\big\|_{L^2({\mathbb R}^3)}\!<\!\infty \!\big\},
\\
&\!H^1({\mathbb R}^3)^3:=\{f=(f_1,f_2,f_3):f_j\in H^1({\mathbb R}^3),\ j=1,2,3\}
\label{bessel-potential2}
\end{align}
Now let $\Omega '$ be $\Omega_+$, $\Omega_-$ or $\mathbb R^3$. We denote by 
${\mathcal D}(\Omega '):=C^{\infty }_{0}(\Omega ')$ the space of infinitely differentiable functions with compact support in $\Omega '$, equipped with the inductive limit topology. 
Let 
${\mathcal D}^*(\Omega ')$ denote the corresponding space of distributions on $\Omega '$, i.e., the dual space of ${\mathcal D}(\Omega ')$.
Let us consider the space
\begin{align}
\label{spaces-Sobolev-inverse}
&H^1({\Omega '}):=\{f\in {\mathcal D}^*(\Omega '):\exists \ F\in H^1({\mathbb R}^3)
\mbox{ such that } F|_{\Omega '}=f\}\,,
\end{align}
where $\cdot |_{\Omega '}$ is the restriction operator to $\Omega '$.
The space $\widetilde{H}^1({\Omega '})$ is the closure of ${\mathcal D}(\Omega ')$ in $H^1({\mathbb R}^3)$. This space can be also characterized as
\begin{align}\label{2.5}
&\widetilde{H}^1({\Omega '}):=\left\{\widetilde{f}\in H^1({\mathbb R}^3):{\rm{supp}}\,
\widetilde{f}\subseteq \overline{\Omega '}\right\}.
\end{align}
Similar to definition \eqref{bessel-potential2}, $H^1({\Omega '})^3$ and $\widetilde{H}^1({\Omega '})^3$ are the spaces of vector-valued functions whose components belong to the scalar spaces {$H^1({\Omega '})$ and $\widetilde{H}^1({\Omega '})$}, respectively (see, e.g., \cite{Lean}). 
The Sobolev space $\widetilde{H}^1({\Omega '})$ can be identified with the closure $\mathring{H}^{1}(\Omega ')$ of ${\mathcal D}(\Omega ')$ in the norm of ${H}^1({\Omega '})$ (see, e.g., \cite[(3.11), (3.13)]{M-M-W}, \cite[Theorem 3.33]{Lean}). The space ${\mathcal D}(\overline{\Omega '})$ is dense in ${H}^1({\Omega '})${,} and the following spaces can be isomorphically identified
({cf., e.g.,} \cite[Theorem 3.14]{Lean})
\begin{equation}
\label{duality-spaces} ({H}^1({\Omega '})){^*}=\widetilde{H}^{-1}({\Omega '}),\
\ {H}^{-1}({\Omega '})=(\widetilde{H}^1({\Omega '})){^*}.
\end{equation}
For $s\in [0,1]$, the {Sobolev space ${H}^s(\partial\Omega )$ on the boundary $\partial\Omega$} can be defined by using the space ${H}^s({\mathbb R}^{2})$, a partition of unity and the pull-backs of the local parametrization of $\partial \Omega $, and ${H}^{-s}(\partial\Omega  )=\left({H}^s(\partial\Omega )\right)^*.$ All the above spaces are Hilbert spaces. For further properties of Sobolev spaces on bounded Lipschitz domains and Lipschitz boundaries, we refer to \cite{Agr-1,J-K1,Lean,M-W,Triebel}.

A useful result for the next arguments is given below (see, e.g., \cite{Co}, \cite[Proposition 3.3]{J-K1}). 
\begin{lem}
\label{trace-operator1} Assume that ${\Omega }:=\Omega _{+}\subset {\mathbb R}^3$ is a bounded Lipschitz domain with connected boundary $\partial \Omega $ and denote by $\Omega _{-}:={\mathbb R}^3\setminus \overline{\Omega }$ the corresponding exterior domain. Then there {exist linear and bounded trace operators} $\gamma_{\pm }:{H}^{1}({\Omega }_{\pm })\to H^{\frac{1}{2}}({\partial\Omega  })$ such that $\gamma_{\pm }f=f|_{{{\partial\Omega }}}$ for any $f\in C^{\infty }(\overline{\Omega }_{\pm })$.
{These operators are surjective and have $($non-unique$)$ bounded linear right inverse operators} $\gamma^{-1}_{\pm }:H^{\frac{1}{2}}({\partial\Omega })\to {H}^{1}({\Omega }_{\pm }).$
\end{lem}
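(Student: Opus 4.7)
The plan is to prove this classical trace result by localization, reduction to the half-space case (in which the trace can be computed explicitly via the Fourier transform), and patching via a partition of unity. Both assertions (existence of a bounded trace operator and of a bounded right inverse) are handled in parallel, and the argument for $\gamma_+$ and $\gamma_-$ is essentially identical after a cut-off.

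\textbf{Step 1 (Localization).} Since $\partial\Omega$ is compact and Lipschitz, I fix a finite open cover $\{U_j\}_{j=1}^N$ of $\partial\Omega$ and an open set $U_0\Subset \Omega_+$ (for the interior case) or $U_0\Subset\Omega_-$ bounded (for the exterior case, after adding a cut-off supported in a bounded neighborhood of $\partial\Omega$) such that $\overline{\Omega_\pm}\subset\bigcup_{j\ge 0}U_j$, together with a subordinate partition of unity $\{\varphi_j\}$. On each $U_j$ ($j\ge 1$), after a rigid motion, $\partial\Omega\cap U_j$ is the graph of a Lipschitz function $\psi_j:\mathbb R^{2}\to\mathbb R$. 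The bi-Lipschitz map $\Phi_j(x',x_3)=(x',x_3+\psi_j(x'))$ flattens this piece of the boundary; since $\Phi_j$ and $\Phi_j^{-1}$ have bounded first derivatives a.e., pull-back by $\Phi_j$ is an isomorphism between the local $H^1$ spaces on $\Omega_\pm\cap U_j$ and $H^1$ on a piece of the half-space $\mathbb R^{3}_\pm=\{x_3\gtrless 0\}$. Similarly, the pull-back is an isomorphism between $H^{1/2}(\partial\Omega\cap U_j)$ and $H^{1/2}$ on an open subset of $\mathbb R^{2}\times\{0\}$.

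\textbf{Step 2 (Half-space model).} Thus it suffices to construct a bounded trace $\gamma:H^1(\mathbb R^{3}_+)\to H^{1/2}(\mathbb R^{2})$ restricting smooth functions to $\{x_3=0\}$. By density of $\mathcal D(\overline{\mathbb R^{3}_+})$ in $H^1(\mathbb R^{3}_+)$, it is enough to establish the estimate for $F\in\mathcal S(\overline{\mathbb R^{3}_+})$. Writing $F$ as the restriction of an element of $H^1(\mathbb R^{3})$ (by the definition \eqref{spaces-Sobolev-inverse}) and using
\begin{equation*}
\widehat{F(\cdot,0)}(\xi')=\frac{1}{2\pi}\int_{\mathbb R}\widehat{F}(\xi',\xi_3)\,d\xi_3,
\end{equation*}
the Cauchy–Schwarz inequality with the weight $(1+|\xi|^2)^{1/2}$ yields
\begin{equation*}
(1+|\xi'|^2)^{1/2}|\widehat{F(\cdot,0)}(\xi')|^2\lesssim \int_{\mathbb R}(1+|\xi|^2)|\widehat F(\xi',\xi_3)|^2\,d\xi_3,
\end{equation*}
from which the bound $\|\gamma F\|_{H^{1/2}(\mathbb R^{2})}\lesssim\|F\|_{H^1(\mathbb R^{3})}$ follows after integrating in $\xi'$. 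For the right inverse in the half-space, I define, for $g\in H^{1/2}(\mathbb R^{2})$,
\begin{equation*}
(Eg)(x',x_3):=\mathcal F^{-1}\!\bigl[\widehat g(\xi')\,e^{-x_3(1+|\xi'|^{2})^{1/2}}\bigr](x'),\quad x_3>0,
\end{equation*}
and a direct Fourier computation shows $E:H^{1/2}(\mathbb R^{2})\to H^1(\mathbb R^{3}_+)$ is bounded with $\gamma\circ E=\mathrm{Id}$.

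\textbf{Step 3 (Patching).} The trace on $\Omega_\pm$ is then defined by $\gamma_\pm f:=\sum_{j=1}^N \gamma\bigl((\varphi_j f)\circ\Phi_j\bigr)\circ\Phi_j^{-1}$, interpreting each summand via the bi-Lipschitz chart; boundedness follows from Steps 1–2 and the $H^1$-continuity of multiplication by the $\varphi_j$. For the right inverse, given $g\in H^{1/2}(\partial\Omega)$, I pull back $\varphi_j g$ through each chart, apply $E$, multiply by a cut-off $\chi_j\in\mathcal D(U_j)$ equal to $1$ near $\operatorname{supp}\varphi_j$, push forward, and sum; the cut-off guarantees the result belongs to $\widetilde H^1(U_j\cap\Omega_\pm)$ and can be extended by zero into $H^1(\Omega_\pm)$ (this also handles the unboundedness of $\Omega_-$). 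The identity $\gamma_\pm\circ\gamma_\pm^{-1}=\mathrm{Id}$ on $H^{1/2}(\partial\Omega)$ is checked on a dense subspace. Surjectivity is then automatic.

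\textbf{Anticipated main obstacle.} The only non-mechanical point is verifying that the chart changes $\Phi_j$ are genuine isomorphisms of the relevant Sobolev spaces when $\psi_j$ is merely Lipschitz; this is classical but requires care, because only integer-order (and fractional via interpolation) spaces are invariant under bi-Lipschitz maps. Here the integer order $H^1$ and the boundary space $H^{1/2}(\partial\Omega)$ (defined precisely through such pull-backs in the excerpt) cause no difficulty, so the argument goes through cleanly.
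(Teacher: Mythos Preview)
Your argument is the standard textbook proof and is correct in outline; the localization/flattening, the half-space Fourier computation for both the trace bound and the Poisson-type extension $E$, and the partition-of-unity reassembly are all sound. One minor point to make explicit in Step~2: since you obtain $\|\gamma F\|_{H^{1/2}}\lesssim\|\tilde F\|_{H^1(\mathbb R^3)}$ for an extension $\tilde F$, you should note that the trace does not depend on the chosen extension (the difference of two extensions lies in $\widetilde H^1(\mathbb R^3_-)$ and hence has vanishing trace), and then take the infimum over extensions to conclude boundedness in the $H^1(\mathbb R^3_+)$ norm as defined in \eqref{spaces-Sobolev-inverse}.

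As for comparison with the paper: the paper does not actually prove this lemma. It is stated as a known background result with references to Costabel \cite{Co} and Jerison--Kenig \cite[Proposition~3.3]{J-K1} (and implicitly McLean \cite{Lean}), and no proof is supplied. Your write-up is essentially the argument one finds in those sources, so there is no methodological divergence to discuss---you have simply filled in what the paper takes for granted.
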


The jump of a function $u\in H^1({\mathbb R}^3\setminus \partial \Omega )$ across $\partial \Omega $ is denoted by $\left[{\gamma }(u)\right]:={\gamma }_{+}(u)-{\gamma }_{-}(u)$.
For $u\in H^1_{\rm loc}({\mathbb R}^3)$, $\left[{\gamma }(u)\right]=0$. The trace operator $\gamma :H^1({\mathbb R}^3)\to H^{\frac{1}{2}}(\partial \Omega )$ can be also considered and is linear and bounded\footnote{The trace operators defined on Sobolev spaces of vector fields on ${\Omega }_{\pm }$ or ${\mathbb R}^3$ are also denoted by $\gamma_{\pm }$ and $\gamma $, respectively.}.

If $X$ is either an open subset or a surface in ${\mathbb R}^3$, then we use the notation $\langle \cdot ,\cdot \rangle _X$ for the duality pairing of two dual Sobolev spaces defined on $X$.

\subsection{\bf Some weighted Sobolev spaces and related results}
\label{S2.2}
For a point ${\bf x}=(x_1,x_2,x_3)\in {\mathbb R}^3$, its distance to the origin is denoted by $|{\bf x}|=(x_1^2+x_2^2+x_3^2)^{\frac{1}{2}}$. Let $\rho $ denote the weight function
\begin{align}
\label{rho}
\rho ({\bf x})=(1+|{\bf x}|^2)^{\frac{1}{2}}\,.
\end{align}
For $\lambda \in {\mathbb R}$, we consider the weighted space $L^2(\rho^\lambda ;{\mathbb R}^3)$ given by
\begin{align}
\label{Lp-weight}
&f\in {L^{2}(\rho^\lambda ;{\mathbb R}^3)} \Longleftrightarrow {\rho }^\lambda f\in L^{2}({\mathbb R}^3),
\end{align}
which is a Hilbert space when it is endowed with the inner product and the associated norm,
\begin{align}
\label{Lp-weight2}
&(f,g)_{L^{2}(\rho^\lambda ;{\mathbb R}^3)} :=\int_{\mathbb R^3}fg{\rho }^{2\lambda}dx,\ \|f\|^2_{L^{2}(\rho^\lambda ;{\mathbb R}^3)}:= (f,f)_{L^{2}(\rho^\lambda ;{\mathbb R}^3)}.
\end{align}
We also consider the weighted Sobolev space
\begin{align}
\label{weight-1}
{\mathcal H}^{1}({\mathbb R}^3):&=\left\{f\in {\mathcal D}'({\mathbb R}^3):{\rho }^{-1}f\in L^2({\mathbb R}^3),\ \nabla f\in L^2({\mathbb R}^3)^3\right\}\nonumber\\
&=\left\{f\in L^2(\rho ^{-1};{\mathbb R}^3):\nabla f\in L^2({\mathbb R}^3)^3\right\},
\end{align}
which is a Hilbert space with respect to the inner product
\begin{equation}
\label{weight-2ip}
(f,g)_{{\mathcal H}^{1}({\mathbb R}^3)}:=(f,g)_{L^2(\rho^{-1};{\mathbb R}^3)}+(\nabla f,\nabla g)_{L^2({\mathbb R}^3)^3}
\end{equation}
and the associated norm
\begin{equation}
\label{weight-2}
\|f\|_{{\mathcal H}^{1}({\mathbb R}^3)}^2:=\left\|{\rho }^{-1}f\right\|_{L^2({\mathbb R}^3)}^2+\|\nabla f\|_{L^2({\mathbb R}^3)^3}^2
\end{equation}
(cf. \cite{Ha}; see also \cite{Amrouche-1}). The spaces $L^2(\rho^\lambda ;{\Omega }_{-})$ and ${\mathcal H}^{1}(\Omega _{-})$ can be similarly defined, and ${\mathcal D}(\overline{\Omega }_{-})$ is dense in ${\mathcal H}^{1}(\Omega _{-})$ (see, e.g.,  \cite[Theorem I.1]{Ha}, \cite[Ch.1, Theorem 2.1]{Giroire1987}).
The seminorm
\begin{align}
\label{seminorm}
|f|_{{\mathcal H}^{1}(\Omega _{-})}:=\|\nabla f\|_{L^2(\Omega _{-})^3}
\end{align}
is equivalent to the norm of ${\mathcal H}^{1}(\Omega _{-})$ defined as in \eqref{weight-2}, with $\Omega _{-}$ in place of ${\mathbb R}^3$ (see, e.g., \cite[{Chapter XI, Part B, \S 1,} Theorem 1]{Do-Li}). The {weighted} spaces $L^2(\rho^{-1};{\Omega }_{+})$ and ${\mathcal H}^{1}(\Omega _{+})$ coincide with the standard spaces $L^2({\Omega }_{+})$ and $H^{1}(\Omega _{+})$, respectively (with equivalent norms).

Note that the result in Lemma \ref{trace-operator1} extends also to the weighted Sobolev space ${\mathcal H}^{1}({\Omega }_{-})$. Therefore, there exists a linear bounded {\it exterior trace operator}
\begin{align}
\label{ext-trace}
\gamma_{-}:{\mathcal H}^{1}({\Omega }_{-})\to H^{\frac{1}{2}}({\partial\Omega }),
\end{align}
which is also surjective {(see \cite[p. 69]{Sa-Se})}. Moreover,
the embedding of the space $H^{1}(\Omega _{-})$ into ${\mathcal H}^{1}(\Omega _{-})$ and Lemma~\ref{trace-operator1} show the existence of a (non-unique) linear and bounded right inverse $\gamma^{-1}_-: H^{\frac{1}{2}}({\partial\Omega })\to \mathcal{H}^{1}({\Omega }_-)$ of operator \eqref{ext-trace}
(see \cite[Lemma 2.2]{K-L-M-W}, \cite[Theorem 2.3, Lemma 2.6]{Mikh}).

{Let $\mathring{\mathcal H}^{1}(\Omega _{-})\subset {\mathcal H}^{1}(\Omega_-)$ denote the closure of ${\mathcal D}({\Omega  }_{-})$ in ${\mathcal H}^{1}(\Omega _{-})$. This space can be characterized as
\begin{equation}
\label{property}
\mathring{\mathcal H}^{1}(\Omega _{-})=\big\{v\in {{\mathcal H}^{1}}(\Omega _{-}):\gamma_{-}v=0 \mbox{ on } \partial \Omega \big\}
\end{equation}
(cf., e.g., \cite[Theorem 3.33]{Lean}). Also let $\widetilde{\mathcal H}^{1}(\Omega _{-})\subset {\mathcal H}^{1}(\mathbb R^3)$ denote the closure of ${\mathcal D}({\Omega  }_{-})$ in ${\mathcal H}^{1}(\mathbb R^3)$. This space can be also characterized as
\begin{align}
\widetilde{\mathcal H}^{1}(\Omega _{-})=\{u\in {\mathcal H}^{1}(\mathbb R^3):{\rm{supp}}\, {u}\subseteq \overline{\Omega }_{-}\},
\end{align}
and can be isomorphically identified with the space $\mathring{\mathcal H}^{1}(\Omega _{-})$ via the extension by zero operator $\mathring E_{-}$, i.e., $\widetilde{\mathcal H}^{1}(\Omega _{-})=\mathring E_-\mathring{\mathcal H}^{1}(\Omega _{-})$ (cf., e.g., \cite[Theorem 3.29 (ii)]{Lean}).
In addition, consider the spaces (see, e.g.,  \cite[p. 44]{Amrouche-1}, \cite[Theorem 2.4]{Lang-Mendez})
\begin{align*}
{\mathcal H}^{-1}(\mathbb R^3)\!:=\!\big({\mathcal H}^{1}(\mathbb R^3)\big)^*, {\mathcal H}^{-1}(\Omega _{-}):=\big(\widetilde{\mathcal H}^{1}(\Omega _{-})\big)^*,
\widetilde {\mathcal H}^{-1}(\Omega _{-})\!:=\!\left({\mathcal H}^{1}(\Omega _{-})\right)^*.
\end{align*}

\section{The conormal derivative operators for the Stokes system with $L^{\infty}$ coefficients}

In the sequel we assume that the viscosity coefficient $\mu$ of the Stokes system \eqref{Stokes} belongs to $L^{\infty }({\mathbb R}^3)$ and there exists a constant $c_\mu>0$, such that
\begin{align}
\label{mu}
c_\mu^{-1}\leq\mu \leq {c_\mu} \mbox{ a.e. in } {\mathbb R}^3.
\end{align}

Let ${\mathbb E}({\bf u}):=\frac{1}{2}(\nabla{\bf u}+(\nabla{\bf u})^\top)$ be the strain rate tensor.
If $({\bf u},\pi)\!\in \!C^1(\overline \Omega_{\pm})^3\!\times \!C^0(\overline\Omega_{\pm})$, we can define the {\em classical} interior and exterior conormal derivatives (i.e., {\it the boundary traction fields}) for the Stokes system \eqref{Stokes} with continuously differentiable viscosity coefficient $\mu$ by the well-known formula
\begin{align}
\label{2.37-}
{\bf t}_{\mu}^{c\pm}({\bf u},\pi ):={\gamma_\pm}\left(-\pi{\mathbb I}+2{\mu}{\mathbb E}({\bf u})\right)\boldsymbol\nu ,
\end{align}
where $\boldsymbol\nu $ is the outward unit normal to $\Omega_{ +}$, defined a.e. on $\partial {\Omega }$.
Then for any function $\boldsymbol\varphi \in {\mathcal D}({\mathbb R}^3)^3$ we obtain the first Green identity
\begin{align}
{\pm}\left\langle {\bf t}_{\mu}^{c\pm}({\bf u},\pi ),
\boldsymbol\varphi \right\rangle _{_{\!\partial\Omega  }}
= &2\langle {\mu}{\mathbb E}({\bf u}),{\mathbb E}(\boldsymbol\varphi )\rangle _{\Omega_{\pm}}
-\langle \pi,{\rm{div}}\, \boldsymbol\varphi \rangle _{\Omega_{\pm}}
+\left\langle {\mathcal L}_{\mu }({\bf u},\pi ),\boldsymbol\varphi\right\rangle _{{\Omega_{\pm}}}.\nonumber
\end{align}
This formula suggests the following weak definition of the generalized conormal derivative for the Stokes system with $L^{\infty}$ coefficients in the setting of $L^2$-weighted Sobolev spaces (cf., e.g., \cite[Lemma 3.2]{Co}, \cite[Lemma 2.9]{K-L-M-W}, \cite[Lemma 2.2]{K-M-W4}, \cite[Definition 3.1, Theorem 3.2]{Mikh}, \cite[Theorem 10.4.1]{M-W}).
\begin{defn}
\label{conormal-derivative-var-Brinkman}
Let $\mu \in L^{\infty }({\mathbb R}^3)$ satisfy assumption \eqref{mu}. Let
\begin{multline}
\label{conormal-derivative-var-Brinkman-1}
{\pmb{\mathcal H}}^{1}({\Omega_{\pm}},{\mathcal L}_{\mu }):=\Big\{({\bf u}_{\pm},\pi_{\pm},{\tilde{\bf f}}_{\pm})\in {\mathcal H}^{1}({\Omega_{\pm}})^3\times L^2({\Omega_{\pm}})\times \widetilde{\mathcal H}^{-1}({\Omega_{\pm}})^3:\\
{\boldsymbol{\mathcal L}}_{\mu }({\bf u}_{\pm},\pi_{\pm} )={\tilde{\bf f}}_{\pm}|_{\Omega_{\pm}} \mbox{ and } {\rm{div}}\ {\bf u}_{\pm}=0 \mbox{ in } {\Omega_{\pm}}\Big\}.
\end{multline}
Then define the conormal derivative operator ${\bf t}_{\mu }^{\pm}\!:\!{\pmb{\mathcal H}}^{1}({\Omega_{\pm}},{\boldsymbol{\mathcal L}}_{\mu })\!\to \!H^{-\frac{1}{2}}(\partial\Omega )^3$,
\begin{align}
\label{conormal-derivative-var-Brinkman-2}
&{\pmb{\mathcal H}}^{1}({\Omega_{\pm}},{\boldsymbol{\mathcal L}}_{\mu })\ni ({\bf u}_{\pm},\pi_{\pm} ,{\tilde{\bf f}}_{\pm})\longmapsto {\bf t}_{{\mu }}^{\pm}({\bf u}_{\pm},\pi_{\pm} ;{\tilde{\bf f}}_{\pm})\in H^{-\frac{1}{2}}(\partial\Omega )^3,\\
\label{conormal-derivative-var-Brinkman-3}
&{\pm}\left\langle {\bf t}_{{\mu }}^{\pm}({\bf u}_{\pm},\pi_{\pm} ;{\tilde{\bf f}}_{\pm}),{\Phi }\right\rangle _{_{\!\partial\Omega  }}:=2\langle \mu {\mathbb E}({\bf u}_{\pm}),{\mathbb E}(\gamma^{-1}_{\pm}{\Phi})\rangle _{\Omega_{\pm}}\nonumber\\
&\hspace{2em}-\langle {\pi_{\pm}},{\rm{div}}(\gamma^{-1}_{\pm}{\Phi})\rangle _{\Omega_{\pm}}+\langle {\tilde{\bf f}}_{\pm},\gamma^{-1}_{\pm}{\Phi}\rangle _{{\Omega_{\pm}}},\ \forall \, \Phi \in H^{\frac{1}{2}}(\partial\Omega )^3,
\end{align}
where $\gamma^{-1}_{\pm}:H^{\frac{1}{2}}(\partial\Omega )^3\to {\mathcal H}^{1}({\Omega_{\pm}})^3$ is a $($non-unique$)$ bounded right inverse of the trace operator $\gamma_{\pm}:
{\mathcal H}^{1}({\Omega_{\pm}})^3\to H^{\frac{1}{2}}(\partial\Omega )^3$. 
\end{defn}
We use the simplified notation ${\bf t}_{\mu}^{\pm}({\bf u}_{\pm},\pi_{\pm})$ for ${\bf t}_{\mu}^{\pm}({\bf u}_{\pm},\pi_{\pm};{\bf 0})$.
The following assertion can be proved similar to \cite[Theorem 5.3]{Mikh-3}, \cite[Lemma 2.9]{K-L-M-W}.
\begin{lem}
\label{lem-add1}
Let $\mu \!\in \!L^{\infty }({\mathbb R}^3)$ satisfy assumption \eqref{mu}. Then for all ${\bf w}_\pm \!\in \!{\mathcal H}^{1}({\Omega_{\pm}})^3$ and $({\bf u}_{\pm},\pi_{\pm},{\tilde{\bf f}}_{\pm})\in {\pmb{\mathcal H}}^{1}({\Omega_{\pm}},{\boldsymbol{\mathcal L}}_{\mu })$
the following identity holds
\begin{align}
\label{Green-particular}
{\pm}\big\langle {\bf t}_{\mu }^{\pm}({\bf u}_{\pm},\pi_{\pm};{\tilde{\bf f}}_{\pm}),\gamma_{\pm}{\bf w}_\pm\big\rangle _{_{\partial\Omega  }}=2\langle \mu {\mathbb E}({\bf u}_{\pm}),{\mathbb E}({\bf w}_\pm)\rangle _{\Omega_{\pm}}&-\langle {\pi_{\pm}},{\rm{div}}\, {\bf w}_\pm \rangle _{\Omega_{\pm}}\nonumber\\
&+\langle {\tilde{\bf f}}_{\pm},{\bf w}_\pm \rangle _{{\Omega_{\pm}}}\,.
\end{align}
\end{lem}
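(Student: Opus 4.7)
The plan is to split ${\bf w}_\pm$ into a lifting of its boundary trace plus a trace-zero remainder, apply Definition~\ref{conormal-derivative-var-Brinkman} to the lifting, and handle the remainder by density of ${\mathcal D}(\Omega_\pm)^3$ in $\mathring{\mathcal H}^{1}(\Omega_\pm)^3$.

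First, I would set ${\bf w}_\pm^0:={\bf w}_\pm-\gamma_\pm^{-1}(\gamma_\pm{\bf w}_\pm)\in{\mathcal H}^{1}(\Omega_\pm)^3$. Since $\gamma_\pm{\bf w}_\pm^0=0$, the characterization \eqref{property} applied componentwise gives ${\bf w}_\pm^0\in\mathring{\mathcal H}^{1}(\Omega_\pm)^3$. Substituting $\Phi=\gamma_\pm{\bf w}_\pm$ into \eqref{conormal-derivative-var-Brinkman-3} yields identity \eqref{Green-particular} with ${\bf w}_\pm$ replaced by $\gamma_\pm^{-1}(\gamma_\pm{\bf w}_\pm)$; consequently \eqref{Green-particular} will follow once I establish
\begin{equation*}
2\langle\mu{\mathbb E}({\bf u}_\pm),{\mathbb E}({\bf w}_\pm^0)\rangle_{\Omega_\pm}-\langle\pi_\pm,{\rm div}\,{\bf w}_\pm^0\rangle_{\Omega_\pm}+\langle\tilde{\bf f}_\pm,{\bf w}_\pm^0\rangle_{\Omega_\pm}=0.\tag{$\star$}
\end{equation*}

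Second, I would verify $(\star)$ for $\boldsymbol{\varphi}\in{\mathcal D}(\Omega_\pm)^3$ in place of ${\bf w}_\pm^0$. Because ${\boldsymbol{\mathcal L}}_\mu({\bf u}_\pm,\pi_\pm)=\tilde{\bf f}_\pm|_{\Omega_\pm}$ in ${\mathcal D}^*(\Omega_\pm)^3$, pairing both sides with $\boldsymbol{\varphi}$ and integrating by parts twice---using the symmetry of ${\mathbb E}({\bf u}_\pm)$ to convert $\nabla\boldsymbol{\varphi}$ into ${\mathbb E}(\boldsymbol{\varphi})$---yields $\langle\tilde{\bf f}_\pm,\boldsymbol{\varphi}\rangle_{\Omega_\pm}=-2\langle\mu{\mathbb E}({\bf u}_\pm),{\mathbb E}(\boldsymbol{\varphi})\rangle_{\Omega_\pm}+\langle\pi_\pm,{\rm div}\,\boldsymbol{\varphi}\rangle_{\Omega_\pm}$, which is precisely $(\star)$ for $\boldsymbol{\varphi}$.

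Finally, each of the three terms in $(\star)$ is continuous in its argument ranging in ${\mathcal H}^{1}(\Omega_\pm)^3$: the first two because $\mu\in L^\infty({\mathbb R}^3)$, ${\mathbb E}({\bf u}_\pm)\in L^2(\Omega_\pm)^{3\times3}$ and $\pi_\pm\in L^2(\Omega_\pm)$; the third because $\tilde{\bf f}_\pm\in\widetilde{\mathcal H}^{-1}(\Omega_\pm)^3=({\mathcal H}^{1}(\Omega_\pm)^3)^*$. Since ${\mathcal D}(\Omega_\pm)^3$ is dense in $\mathring{\mathcal H}^{1}(\Omega_\pm)^3$ by definition, $(\star)$ extends to ${\bf w}_\pm^0$, completing the argument. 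The main subtlety is to reconcile the distributional pairing $\langle\tilde{\bf f}_\pm,\boldsymbol{\varphi}\rangle_{\Omega_\pm}$ used in the previous step with the $\widetilde{\mathcal H}^{-1}$--${\mathcal H}^{1}$ duality invoked here, which follows from the continuous embedding ${\mathcal D}(\Omega_\pm)^3\hookrightarrow\widetilde{\mathcal H}^{1}(\Omega_\pm)^3\cong\mathring{\mathcal H}^{1}(\Omega_\pm)^3$ via $\mathring E_\pm$.
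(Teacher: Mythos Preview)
Your argument is correct and is precisely the standard route: decompose ${\bf w}_\pm$ into a trace extension plus a zero-trace remainder, invoke the definition \eqref{conormal-derivative-var-Brinkman-3} for the former, and kill the latter via the distributional PDE and density of ${\mathcal D}(\Omega_\pm)^3$ in $\mathring{\mathcal H}^{1}(\Omega_\pm)^3$. The paper does not actually supply a proof of Lemma~\ref{lem-add1}; it only cites \cite[Theorem 5.3]{Mikh-3} and \cite[Lemma 2.9]{K-L-M-W}, and your argument is essentially the one found in those references. One cosmetic remark: the characterization \eqref{property} is stated in the paper only for $\Omega_-$, so for $\Omega_+$ you should appeal directly to the bounded-domain analogue (e.g., \cite[Theorem 3.33]{Lean}) rather than to \eqref{property}.
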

\noindent Let $\gamma $ denote the trace operator from ${\mathcal H}^1({\mathbb R}^3)^3$ to $H^{\frac{1}{2}}(\partial \Omega )^3$ $($cf., e.g., \cite[Theorem 2.3, Lemma 2.6]{Mikh}, \cite[(2.2)]{B-H}$)$.
For
$({\bf u}_{\pm},\pi_{\pm},{\tilde{\bf f}}_{\pm})\in {\pmb{\mathcal H}}^{1}({\Omega_{\pm}},{\boldsymbol{\mathcal L}}_{\mu })$, let
\begin{align}
\label{u-pi-f}
&{\bf u}:=\mathring E_+{\bf u}_+ + \mathring E_-{\bf u}_-,\
\pi:=\mathring E_+\pi_+ + \mathring E_-\pi_-,\
{\bf f}:={\tilde{\bf f}}_+ + {\tilde{\bf f}}_-\\
\label{jt}
&[{\bf t}_{\mu }({\bf u},\pi;{\bf f})]:=
\!{\bf t}_{\mu }^{+}({\bf u}_+,\pi_+;\tilde{\bf f}_+)\!-\!{\bf t}_{\mu }^{-}({\bf u}_-,\pi_-;\tilde{\bf f}_-).
\end{align}
Moreover, if ${\bf f}={\bf 0}$, we define
\begin{align}
\label{jt0}
&[{\bf t}_{\mu }({\bf u},\pi)]:=[{\bf t}_{\mu }({\bf u},\pi;{\bf 0})]
=\!{\bf t}_{\mu }^{+}({\bf u}_+,\pi_+)\!-\!{\bf t}_{\mu }^{-}({\bf u}_-,\pi_-).
\end{align}
Then Lemma \ref{lem-add1} leads to the following result.
\begin{lem}
\label{lemma-add-new-1}
Let $\mu \!\in \!L^{\infty }({\mathbb R}^3)$ satisfy assumption \eqref{mu}. Also let
$({\bf u}_{\pm},\pi_{\pm},{\tilde{\bf f}}_{\pm})\in {\pmb{\mathcal H}}^{1}({\Omega_{\pm}},{\boldsymbol{\mathcal L}}_{\mu })$ and let $({\bf u},\pi ,{\bf f})$ be defined as in \eqref{u-pi-f}.
Then for all ${\bf w}\!\in \!{\mathcal H}^{1}(\mathbb R^3)^3$, the following formula holds
\begin{align}
\label{Green-particular}
\big\langle [{\bf t}_{\mu }({\bf u},\pi;{\bf f})],\gamma{\bf w}\big\rangle _{_{\partial\Omega  }}
=&2\langle \mu {\mathbb E}({\bf u}_+),{\mathbb E}({\bf w})\rangle _{\Omega_+}
+2\langle \mu {\mathbb E}({\bf u}_-),{\mathbb E}({\bf w})\rangle _{\Omega_-}\nonumber\\
&-\langle {\pi},{\rm{div}}\, {\bf w} \rangle_{\mathbb R^3}
+\langle {{\bf f}},{\bf w} \rangle_{\mathbb R^3}\,.
\end{align}
\end{lem}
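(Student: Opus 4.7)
The plan is to reduce this identity to Lemma \ref{lem-add1} applied separately on $\Omega_+$ and on $\Omega_-$, with test field taken to be the restrictions ${\bf w}_\pm := {\bf w}|_{\Omega_\pm}$, and then add the two resulting identities together. Concretely, first I would observe that any ${\bf w} \in {\mathcal H}^{1}({\mathbb R}^3)^3$ restricts to elements ${\bf w}_\pm\in{\mathcal H}^{1}(\Omega_\pm)^3$, and that the absence of a trace jump for such a single-space field yields
\begin{align*}
\gamma_+{\bf w}_+=\gamma_-{\bf w}_-=\gamma{\bf w}\quad\text{on }\partial\Omega.
\end{align*}
Hence Lemma \ref{lem-add1} is applicable in each of $\Omega_\pm$ to the triple $({\bf u}_\pm,\pi_\pm,\tilde{\bf f}_\pm)$ against ${\bf w}_\pm$, yielding two Green formulas, one for $\Omega_+$ with the $+$ sign and one for $\Omega_-$ with the $-$ sign, in which the boundary pairings read $\pm\langle{\bf t}_\mu^\pm({\bf u}_\pm,\pi_\pm;\tilde{\bf f}_\pm),\gamma{\bf w}\rangle_{\partial\Omega}$.

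Next I would add those two identities. By definition \eqref{jt}, the left-hand side collapses to $\langle [{\bf t}_\mu({\bf u},\pi;{\bf f})],\gamma{\bf w}\rangle_{\partial\Omega}$. On the right-hand side, the two viscous terms $2\langle\mu{\mathbb E}({\bf u}_\pm),{\mathbb E}({\bf w}_\pm)\rangle_{\Omega_\pm}$ are already in the desired form since ${\mathbb E}({\bf w}_\pm)={\mathbb E}({\bf w})|_{\Omega_\pm}$. The two pressure terms combine to $\langle\pi,{\rm div}\,{\bf w}\rangle_{\mathbb R^3}$ because $\pi=\mathring E_+\pi_+ + \mathring E_-\pi_-$ and $\partial\Omega$ has Lebesgue measure zero, so the pairing splits additively over $\Omega_+$ and $\Omega_-$.

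The only point that needs a small justification is the combination of the source terms into $\langle{\bf f},{\bf w}\rangle_{\mathbb R^3}$. Here I would invoke the identification $\widetilde{\mathcal H}^{-1}(\Omega_\pm)=({\mathcal H}^{1}(\Omega_\pm))^*$ and view each $\tilde{\bf f}_\pm$ as an element of ${\mathcal H}^{-1}({\mathbb R}^3)^3$ via the canonical extension of functionals (distributions supported in $\overline\Omega_\pm$), so that
\begin{align*}
\langle\tilde{\bf f}_\pm,{\bf w}_\pm\rangle_{\Omega_\pm}=\langle\tilde{\bf f}_\pm,{\bf w}\rangle_{\mathbb R^3},
\end{align*}
and therefore their sum equals $\langle\tilde{\bf f}_++\tilde{\bf f}_-,{\bf w}\rangle_{\mathbb R^3}=\langle{\bf f},{\bf w}\rangle_{\mathbb R^3}$ by definition \eqref{u-pi-f}. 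I expect this duality-extension step to be the only delicate point; the rest is bookkeeping with the additivity of integrals across the partition ${\mathbb R}^3=\Omega_+\cup\partial\Omega\cup\Omega_-$ and the trace compatibility recorded above.
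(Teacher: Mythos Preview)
Your proposal is correct and follows essentially the same route as the paper: the paper simply states that the result follows from Lemma~\ref{lem-add1}, and your write-up makes explicit the obvious intended argument of applying that lemma on $\Omega_+$ and $\Omega_-$ with ${\bf w}_\pm={\bf w}|_{\Omega_\pm}$ and adding. Your handling of the trace compatibility, the additivity of the $\pi$- and $\mu{\mathbb E}$-pairings, and the identification $\langle\tilde{\bf f}_\pm,{\bf w}_\pm\rangle_{\Omega_\pm}=\langle\tilde{\bf f}_\pm,{\bf w}\rangle_{\mathbb R^3}$ via $\widetilde{\mathcal H}^{-1}(\Omega_\pm)\hookrightarrow{\mathcal H}^{-1}(\mathbb R^3)$ is exactly the bookkeeping that the paper suppresses.
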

\noindent We also need the following particular case of Lemma \ref{lemma-add-new-1} when ${\bf f}={\bf 0}$.
\begin{lem}
\label{lemma-add-new}
Let $\mu \!\in \!L^{\infty }({\mathbb R}^3)$ satisfy assumption \eqref{mu}. Also let
$({\bf u}_{\pm},\pi_{\pm},{\bf 0})\in {\pmb{\mathcal H}}^{1}({\Omega_{\pm}},{\boldsymbol{\mathcal L}}_{\mu })$. 
Then for all ${\bf w}\!\in \!{\mathcal H}^{1}(\mathbb R^3)^3$,
\begin{align}
\label{jump-conormal-derivative-1}
\big\langle [{\bf t}_{\mu }({\bf u},\pi )],\gamma{\bf w}\big\rangle _{_{\partial\Omega  }}
=&2\langle \mu {\mathbb E}({\bf u}_+),{\mathbb E}({\bf w})\rangle _{\Omega_+}
+2\langle \mu {\mathbb E}({\bf u}_-),{\mathbb E}({\bf w})\rangle _{\Omega_-}\nonumber\\
&-\langle {\pi},{\rm{div}}\, {\bf w} \rangle_{\mathbb R^3}
\,.
\end{align}
\end{lem}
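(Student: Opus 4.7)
The plan is to obtain Lemma~\ref{lemma-add-new} as an immediate specialization of the preceding Lemma~\ref{lemma-add-new-1} to the case $\tilde{\mathbf{f}}_{\pm}={\bf 0}$. Indeed, under the hypotheses of Lemma~\ref{lemma-add-new} we have $({\bf u}_\pm,\pi_\pm,{\bf 0})\in\boldsymbol{\mathcal H}^1(\Omega_\pm,\boldsymbol{\mathcal L}_\mu)$, so Lemma~\ref{lemma-add-new-1} is applicable with this choice of data.

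Concretely, I would first observe that with $\tilde{\mathbf{f}}_{\pm}={\bf 0}$, definition \eqref{u-pi-f} yields ${\bf f}=\tilde{\mathbf{f}}_{+}+\tilde{\mathbf{f}}_{-}={\bf 0}$, and hence the duality pairing $\langle{\bf f},{\bf w}\rangle_{\mathbb R^3}$ appearing on the right-hand side of \eqref{Green-particular} in Lemma~\ref{lemma-add-new-1} vanishes. Next, by the definition \eqref{jt0} of the simplified notation, the jump $[{\bf t}_\mu({\bf u},\pi;{\bf f})]$ on the left-hand side coincides with $[{\bf t}_\mu({\bf u},\pi)]$ in this case, since $[{\bf t}_\mu({\bf u},\pi)]=[{\bf t}_\mu({\bf u},\pi;{\bf 0})]={\bf t}_\mu^+({\bf u}_+,\pi_+)-{\bf t}_\mu^-({\bf u}_-,\pi_-)$. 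Substituting these two observations into the identity of Lemma~\ref{lemma-add-new-1} yields precisely \eqref{jump-conormal-derivative-1} for every ${\bf w}\in\mathcal H^1(\mathbb R^3)^3$, completing the proof.

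There is essentially no obstacle, since the argument is a purely notational reduction: the conormal operator is defined so that the ${\bf f}$-dependence enters linearly through the single term $\langle\tilde{\mathbf{f}}_{\pm},\gamma_{\pm}^{-1}\Phi\rangle_{\Omega_{\pm}}$ in \eqref{conormal-derivative-var-Brinkman-3}, and setting ${\bf f}={\bf 0}$ removes exactly the corresponding term on the right of the Green identity established in Lemma~\ref{lemma-add-new-1}. The only point worth noting carefully is that the pairings on the two sides are defined on different spaces ($\partial\Omega$ and $\mathbb R^3$), but since $\tilde{\mathbf{f}}_{\pm}$ is only used through its action on ${\bf w}\in\mathcal H^1(\mathbb R^3)^3$, both vanish simultaneously when ${\bf f}={\bf 0}$.
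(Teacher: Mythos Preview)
Your proposal is correct and matches the paper's own treatment: the paper introduces Lemma~\ref{lemma-add-new} explicitly as the particular case of Lemma~\ref{lemma-add-new-1} with ${\bf f}={\bf 0}$ and gives no separate proof. Your notational reduction via \eqref{u-pi-f} and \eqref{jt0} is exactly the intended argument.
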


\section{Newtonian and single layer potentials for the Stokes system with $L^{\infty}$ coefficients}

Recall that the function $\mu \!\in \!L^{\infty }({\mathbb R}^3)$ satisfies conditions \eqref{mu}. Next, we define the Newtonian and single layer potentials for the $L^{\infty}$ coefficient Stokes system \eqref{Stokes}.
\subsection{Variational solution of the variable-coefficient Stokes system in ${\mathbb R}^3$.}
First we show the following useful well-posedness result.
\begin{lem}
\label{lemma-a47-1-Stokes}
Let $a_{\mu }(\cdot ,\cdot ):{\mathcal H}^1({\mathbb R}^3)^3\times {\mathcal H}^1({\mathbb R}^3)^3\to {\mathbb R}$ and $b(\cdot ,\cdot):{\mathcal H}^1({\mathbb R}^3)^3\times L^2({\mathbb R}^3)\to {\mathbb R}$ be the bilinear forms given by
\begin{align}
\label{a-v}
a_{\mu }({\bf u},{\bf v})&:=2\langle \mu {\mathbb E}({\bf u}),{\mathbb E}({\bf v})\rangle _{ {\mathbb R}^3},\ \forall \, {\bf u},{\bf v}\in {\mathcal H}^1({\mathbb R}^3)^3,\\
\label{b-v}
b({\bf v},q)&:=-\langle {\rm{div}}\, {\bf v},q\rangle _{{\mathbb R}^3},\ \forall \, {\bf v}\in {\mathcal H}^{1}({\mathbb R}^3)^3,\ \forall \, q\in L^2({\mathbb R}^3).
\end{align}
Also let $\boldsymbol\ell :{\mathcal H}^{1}({\mathbb R}^3)^3\to {\mathbb R}$ be a linear and bounded map. Then the mixed variational formulation
\begin{align}
\label{transmission-S-variational-dl-3-equiv-0}
\left\{\begin{array}{ll}
a_{\mu }({\bf u},{\bf v})+b({\bf v},p)=\boldsymbol\ell ({\bf v}),\ \forall \ {\bf v}\in {\mathcal H}^{1}({\mathbb R}^3)^3,\\
{b({\bf u},q)=0,\ \forall \, q\in L^2({\mathbb R}^3)}
\end{array}
\right.
\end{align}
is well-posed. Hence, \eqref{transmission-S-variational-dl-3-equiv-0} has a unique solution $({\bf u},p)\in {{\mathcal H}^{1}({\mathbb R}^3)^3}\times L^2({\mathbb R}^3)$ and there exists a constant $C=C(\mu )>0$ such that
\begin{align}
\label{estimate-1-wp-S}
\|{\bf u}\|_{{\mathcal H}^{1}({\mathbb R}^3)^3}+\|p\|_{L^2({\mathbb R}^3)}\leq
C\|\boldsymbol\ell \|_{{\mathcal H}^{-1}({\mathbb R}^3)^3}.
\end{align}
\end{lem}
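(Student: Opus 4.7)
The plan is to apply the Babu\v{s}ka--Brezzi saddle-point theorem to the mixed formulation \eqref{transmission-S-variational-dl-3-equiv-0}. I would verify four standard ingredients: boundedness of $a_\mu$, boundedness of $b$, coercivity of $a_\mu$ on the kernel of $b$, and the inf-sup condition for $b$; then the estimate \eqref{estimate-1-wp-S} follows from the continuous dependence in Brezzi's theorem.

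First, boundedness: for $a_\mu$, use the upper bound $\mu\le c_\mu$ from \eqref{mu} together with $\|{\mathbb E}(\mathbf v)\|_{L^2({\mathbb R}^3)^{3\times 3}}\le\|\nabla \mathbf v\|_{L^2({\mathbb R}^3)^{3\times 3}}\le\|\mathbf v\|_{{\mathcal H}^1({\mathbb R}^3)^3}$, yielding $|a_\mu(\mathbf u,\mathbf v)|\le 2c_\mu\|\mathbf u\|_{{\mathcal H}^1({\mathbb R}^3)^3}\|\mathbf v\|_{{\mathcal H}^1({\mathbb R}^3)^3}$. For $b$, the bound $|b(\mathbf v,q)|\le\|\mathrm{div}\,\mathbf v\|_{L^2({\mathbb R}^3)}\|q\|_{L^2({\mathbb R}^3)}\le\|\mathbf v\|_{{\mathcal H}^1({\mathbb R}^3)^3}\|q\|_{L^2({\mathbb R}^3)}$ is immediate.

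Second, coercivity of $a_\mu$ on the kernel (actually, even on the whole space here) relies on two classical weighted-space estimates. (i) Hardy's inequality in ${\mathbb R}^3$ gives $\|\rho^{-1}\mathbf v\|_{L^2({\mathbb R}^3)^3}\le C_1\|\nabla \mathbf v\|_{L^2({\mathbb R}^3)^{3\times 3}}$ for every $\mathbf v\in{\mathcal H}^1({\mathbb R}^3)^3$, so the seminorm $\|\nabla\cdot\|_{L^2}$ is equivalent to the full norm on ${\mathcal H}^1({\mathbb R}^3)^3$. (ii) Korn's inequality on ${\mathcal H}^1({\mathbb R}^3)^3$: since rigid motions do not lie in $L^2(\rho^{-1};{\mathbb R}^3)^3$, the only element of ${\mathcal H}^1({\mathbb R}^3)^3$ with ${\mathbb E}(\mathbf v)=\mathbf 0$ is $\mathbf v=\mathbf 0$, and a standard argument yields $\|\nabla\mathbf v\|_{L^2}\le C_2\|{\mathbb E}(\mathbf v)\|_{L^2}$. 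Combining these with the lower bound $\mu\ge c_\mu^{-1}$ from \eqref{mu} produces
\begin{equation*}
a_\mu(\mathbf v,\mathbf v)\ge 2c_\mu^{-1}\|{\mathbb E}(\mathbf v)\|_{L^2({\mathbb R}^3)^{3\times 3}}^2
\ge \alpha\|\mathbf v\|_{{\mathcal H}^1({\mathbb R}^3)^3}^2,\qquad\alpha>0.
\end{equation*}

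Third, and I expect this to be the main technical obstacle, the inf-sup condition for $b$: given $q\in L^2({\mathbb R}^3)$, I must produce $\mathbf v_q\in{\mathcal H}^1({\mathbb R}^3)^3$ with $\mathrm{div}\,\mathbf v_q=-q$ and $\|\mathbf v_q\|_{{\mathcal H}^1({\mathbb R}^3)^3}\le C\|q\|_{L^2({\mathbb R}^3)}$. This is precisely the surjectivity of the divergence operator from ${\mathcal H}^1({\mathbb R}^3)^3$ onto $L^2({\mathbb R}^3)$, which is known in weighted Sobolev spaces on ${\mathbb R}^3$ (see, e.g., results of Amrouche--Girault--Giroire and the construction used by Alliot--Amrouche \cite{Al-Am} and Amrouche--Nguyen \cite{Amrouche-1}); I would invoke that reference rather than reconstruct the Bogovski\u{\i}-type operator. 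Setting $\mathbf v=\mathbf v_q$ then gives $b(\mathbf v_q,q)=\|q\|_{L^2}^2$ and hence the required uniform lower bound for $\sup_{\mathbf v\ne 0} b(\mathbf v,q)/\|\mathbf v\|_{{\mathcal H}^1({\mathbb R}^3)^3}$.

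Finally, with the boundedness of $\boldsymbol\ell$ and the three conditions above, Brezzi's theorem delivers a unique solution $(\mathbf u,p)\in{\mathcal H}^1({\mathbb R}^3)^3\times L^2({\mathbb R}^3)$ together with the a~priori estimate \eqref{estimate-1-wp-S}, where the constant $C$ depends on $c_\mu$ through $\alpha$ and on the inf-sup and continuity constants.
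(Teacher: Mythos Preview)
Your proposal is correct and follows essentially the same route as the paper: verify boundedness of $a_\mu$ and $b$, establish coercivity of $a_\mu$ on all of ${\mathcal H}^1({\mathbb R}^3)^3$ via a Korn inequality and the equivalence of the seminorm $\|\nabla\cdot\|_{L^2}$ with the full ${\mathcal H}^1$-norm, obtain the inf-sup condition from the surjectivity of ${\rm div}:{\mathcal H}^1({\mathbb R}^3)^3\to L^2({\mathbb R}^3)$, and then invoke the Babu\v{s}ka--Brezzi theorem. The only cosmetic difference is that the paper uses the explicit Korn-type bound $\|\nabla\mathbf v\|_{L^2}\le 2^{1/2}\|\mathbb E(\mathbf v)\|_{L^2}$ (valid by an integration-by-parts identity on ${\mathcal D}({\mathbb R}^3)^3$ and density) rather than your indirect rigid-motions argument, and it cites \cite{Al-Am-1,Sa-Se} for the divergence surjectivity before passing through Lemma~\ref{surj-inj-inf-sup} to the inf-sup condition.
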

\begin{proof}
By using conditions \eqref{mu} and definition \eqref{weight-2} of the norm of the weighted Sobolev space ${\mathcal H}^{1}({\mathbb R}^3)$ we obtain that
\begin{align}
\label{a-1-v-S}
|a_{\mu }({\bf u},{\bf v})|&\leq 2c_\mu\|{\mathbb E}({\bf u})\|_{L^2({\mathbb R}^3)^{3\times 3}}\|{\mathbb E}({\bf v})\|_{L^2({\mathbb R}^3)^{3\times 3}}\nonumber\\
&\leq 2c_\mu \|{\bf u}\|_{{\mathcal H}^1({\mathbb R}^3)^3}\|{\bf v}\|_{{\mathcal H}^1({\mathbb R}^3)^3},\ \forall\ {\bf u},{\bf v}\in {\mathcal H}^1({\mathbb R}^3)^3.
\end{align}
Moreover, by using the Korn type inequality for functions in $\mathcal H^1({\mathbb R}^3)^3$,
\begin{align}
\label{Korn3-R3}
\|{\rm grad}\,{\bf v}\|_{L^2({\mathbb R}^3)^{3\times 3}}\leq 2^{\frac{1}{2}}\|\mathbb E ({\bf v})\|_{L^2({\mathbb R}^3)^{3\times 3}}
\end{align}
(cf., e.g., \cite[(2.2)]{Sa-Se}) and since the seminorm
\begin{align}
\label{S}
|g|_{{\mathcal H}^{1}({\mathbb R}^3)}:=\|\nabla g\|_{L^2({\mathbb R}^3)^3}
\end{align}
is a norm in $\mathcal H^1({\mathbb R}^3)^3$ equivalent to the norm defined by \eqref{weight-2} (see, e.g., \cite[{Chapter XI, Part B, \S 1,} Theorem 1]{Do-Li}), there exists a constant $c_1$ such that
\begin{align}
\label{a-1-v2-S}
a_{\mu }({\bf u},{\bf u})\geq 2c_\mu^{-1}\|{\mathbb E}({\bf u})\|_{L^2({\mathbb R}^3)^{3\times 3}}^2&\geq c_\mu^{-1}\|\nabla {\bf u}\|_{L^2({\mathbb R}^3)^{3\times 3}}^2\nonumber\\
&\geq c_\mu^{-1}c_1\|{\bf u}\|_{{\mathcal H}^1({\mathbb R}^3)^3}^2,\, \forall \, {\bf u}\in {\mathcal H}^1({\mathbb R}^3)^3.
\end{align}
{Inequalities} \eqref{a-1-v-S} and \eqref{a-1-v2-S} show that $a_{\mu }(\cdot ,\cdot ):{\mathcal H}^1({\mathbb R}^3)^3\times {\mathcal H}^1({\mathbb R}^3)^3\to {\mathbb R}$ is a bounded and coercive bilinear form. Moreover, since the divergence operator
\begin{align}
\label{div-S}
{\rm{div}}:{\mathcal H}^1({\mathbb R}^3)^3\to L^2({\mathbb R}^3)
\end{align}
is bounded, then the bilinear form $b(\cdot ,\cdot):{\mathcal H}^1({\mathbb R}^3)^3\times L^2({\mathbb R}^3)\to {\mathbb R}$ is bounded as well. In addition, the operator in \eqref{div-S} is surjective (cf. \cite[Proposition 2.1]{Al-Am-1}, \cite[Proposition 2.4]{Sa-Se}) and also 
\begin{align}
{\mathcal H}^1_{\rm div}(\mathbb R^3)^3:=&\left\{{\bf w}\in {\mathcal H}^1({\mathbb R}^3)^3: {\rm{div}}\, {\bf w}=0\right\}\nonumber\\
=&\left\{{\bf w}\in {\mathcal H}^1({\mathbb R}^3)^3: b({\bf w},q)\!=\!0,\ \forall \, q\in L^2({\mathbb R}^3) \right\}.\nonumber
\end{align}
In addition, the operator in \eqref{div-S} is surjective (cf. \cite[Proposition 2.1]{Al-Am-1}, \cite[Proposition 2.4]{Sa-Se}), and hence the operator $$-{\rm{div}}:{\mathcal H}^1({\mathbb R}^3)^3/{\mathcal H}^1_{\rm div}(\mathbb R^3)^3\to L^2(\mathbb R^3)$$ is an isomorphism. Then by Lemma \ref{surj-inj-inf-sup}(ii) the bounded bilinear form $b(\cdot ,\cdot):{\mathcal H}^1({\mathbb R}^3)^3\times L^2({\mathbb R}^3)\to {\mathbb R}$ satisfies the inf-sup condition \eqref{inf-sup-sm}. Hence, there exists $\beta _0\in (0,\infty )$ such that
\begin{align}
\label{inf-sup}
{\inf _{q\in L^2({\mathbb R}^3)\setminus \{0\}}\sup _{{\bf w}\in {\mathcal H}^1({\mathbb R}^3)^3\setminus \{\bf 0\}}\frac{b({\bf w},q)}{\|{\bf w}\|_{{\mathcal H}^1({\mathbb R}^3)^3}\|q\|_{L^2({\mathbb R}^3)}}\geq \beta _0.}
\end{align}
By applying Theorem \ref{B-B}, with $X={\mathcal H}^1({\mathbb R}^3)^3$, $M=L^2({\mathbb R}^3)$, $V={\mathcal H}^1_{\rm{div}}(\mathbb R^3)^3$, we conclude that the mixed variational formulation \eqref{transmission-S-variational-dl-3-equiv-0} has a unique solution $({\bf u},p)\in {{\mathcal H}^{1}({\mathbb R}^3)^3}\times L^2({\mathbb R}^3)$ and there exists a constant $C=C(\mu )>0$ such that $({\bf u},p)$ satisfies inequality \eqref{estimate-1-wp-S}.
\end{proof}
Next we use the result of Lemma \ref{lemma-a47-1-Stokes} in order to show the well-posedness of the $L^{\infty}$ coefficient Stokes system in the space ${\mathcal H}^{1}({\mathbb R}^3)^3\times L^2({\mathbb R}^3)$ (see also \cite[Theorem 3]{Al-Am-1} for the constant-coefficient case).
\begin{thm}
\label{Stokes-problem}
Let $\mu \!\in \!L^{\infty }({\mathbb R}^3)$ satisfy conditions \eqref{mu}.
Then the $L^{\infty}$ coefficient Stokes system
\begin{eqnarray}
\label{Newtonian-S}
\left\{
\begin{array}{ll}
{\rm{div}}\left(2\mu {\mathbb E}({\bf u})\right)-\nabla \pi =\boldsymbol\ell , & \boldsymbol\ell \in {\mathcal H}^{-1}({\mathbb R}^3)^3,
\\
{\rm{div}}\, {\bf u}=0, & \mbox{ in } {\mathbb R}^3,
\end{array}\right.
\end{eqnarray}
has a unique solution $({\bf u},p)\in {\mathcal H}^{1}({\mathbb R}^3)^3\times L^2({\mathbb R}^3)$, and there exists a constant $C_0=C_0(\mu )>0$ such that
\begin{align}
\label{estimate-1-wp-S1}
\|{\bf u}\|_{{\mathcal H}^{1}({\mathbb R}^3)^3}+ \|p\|_{L^2({\mathbb R}^3)}\leq
C_0\|\boldsymbol\ell \|_{{\mathcal H}^{-1}({\mathbb R}^3)^3}.
\end{align}
\end{thm}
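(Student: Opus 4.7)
The plan is to reduce the Stokes system \eqref{Newtonian-S} to the mixed variational problem \eqref{transmission-S-variational-dl-3-equiv-0} of Lemma~\ref{lemma-a47-1-Stokes} and then invoke that lemma directly. First I would introduce the bounded linear functional $\boldsymbol\ell_0:\mathcal{H}^{1}(\mathbb{R}^3)^3\to\mathbb{R}$ defined by $\boldsymbol\ell_0(\mathbf{v}):=-\langle\boldsymbol\ell,\mathbf{v}\rangle_{\mathbb{R}^3}$, whose operator norm equals $\|\boldsymbol\ell\|_{\mathcal{H}^{-1}(\mathbb{R}^3)^3}$ since $\mathcal{H}^{-1}(\mathbb{R}^3)^3=(\mathcal{H}^{1}(\mathbb{R}^3)^3)^{*}$. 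Applying Lemma~\ref{lemma-a47-1-Stokes} with $\boldsymbol\ell_0$ in place of $\boldsymbol\ell$ produces a unique pair $(\mathbf{u},p)\in\mathcal{H}^{1}(\mathbb{R}^3)^3\times L^2(\mathbb{R}^3)$ solving \eqref{transmission-S-variational-dl-3-equiv-0}, together with the bound \eqref{estimate-1-wp-S}, which transports immediately to \eqref{estimate-1-wp-S1} with $C_0=C(\mu)$.

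To see that this $(\mathbf{u},p)$ actually solves \eqref{Newtonian-S}, I would test the first variational identity against an arbitrary $\boldsymbol\varphi\in\mathcal{D}(\mathbb{R}^3)^3\subset\mathcal{H}^{1}(\mathbb{R}^3)^3$ and use the symmetry of $\mathbb{E}(\mathbf{u})$, which lets one replace $\mathbb{E}(\boldsymbol\varphi)$ by $\nabla\boldsymbol\varphi$ in the integrand since the antisymmetric part of $\nabla\boldsymbol\varphi$ pairs to zero with the symmetric tensor $\mu\mathbb{E}(\mathbf{u})$. After distributional integration by parts the identity becomes $\mathrm{div}(2\mu\mathbb{E}(\mathbf{u}))-\nabla p=\boldsymbol\ell$ in $\mathcal{D}^{*}(\mathbb{R}^3)^3$, so $p$ plays the role of $\pi$. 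Testing the second variational equation against $q\in\mathcal{D}(\mathbb{R}^3)\subset L^2(\mathbb{R}^3)$ yields $\mathrm{div}\,\mathbf{u}=0$ in $\mathcal{D}^{*}(\mathbb{R}^3)$.

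For PDE uniqueness I would take two solutions in $\mathcal{H}^{1}(\mathbb{R}^3)^3\times L^2(\mathbb{R}^3)$, form their difference $(\mathbf{u},\pi)$ which satisfies \eqref{Newtonian-S} with $\boldsymbol\ell=\mathbf{0}$, and reverse the above calculation: pairing the PDE with $\boldsymbol\varphi\in\mathcal{D}(\mathbb{R}^3)^3$ and invoking symmetry of $\mathbb{E}(\mathbf{u})$ produces $a_\mu(\mathbf{u},\boldsymbol\varphi)+b(\boldsymbol\varphi,\pi)=0$; the density of $\mathcal{D}(\mathbb{R}^3)^3$ in $\mathcal{H}^{1}(\mathbb{R}^3)^3$ (a standard property of this Hanouzet-type weighted space) together with continuity of $a_\mu$ and $b$ lifts the identity to every $\mathbf{v}\in\mathcal{H}^{1}(\mathbb{R}^3)^3$, and since $\mathrm{div}\,\mathbf{u}=0$ as an $L^2$ function one also has $b(\mathbf{u},q)=0$ for all $q\in L^2(\mathbb{R}^3)$. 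The uniqueness clause of Lemma~\ref{lemma-a47-1-Stokes} then forces $(\mathbf{u},\pi)=(\mathbf{0},0)$. No genuinely delicate step appears here: the coercivity via Korn, the inf--sup condition and the Babuska--Brezzi argument have all been absorbed into Lemma~\ref{lemma-a47-1-Stokes}, and what is left is the routine but necessary passage between the distributional and variational formulations, whose only substantive ingredient worth flagging is precisely this density of test functions in the weighted Sobolev space.
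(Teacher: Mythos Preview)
Your proposal is correct and follows essentially the same route as the paper: reduce the Stokes system \eqref{Newtonian-S} to the mixed variational formulation \eqref{transmission-S-variational-dl-3-equiv-0} via the density of ${\mathcal D}({\mathbb R}^3)^3$ in ${\mathcal H}^1({\mathbb R}^3)^3$, then invoke Lemma~\ref{lemma-a47-1-Stokes}. You spell out in more detail the two directions of the equivalence (and carefully track the sign change $\boldsymbol\ell_0=-\langle\boldsymbol\ell,\cdot\rangle$), but the strategy and the only substantive ingredient, namely that density, coincide with the paper's proof.
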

\begin{proof}
Note that the Stokes system \eqref{Newtonian-S} is equivalent to the variational problem \eqref{transmission-S-variational-dl-3-equiv-0} as follows from the density of ${\mathcal D}({\mathbb R}^3)^3$ in the space ${\mathcal H}^1({\mathbb R}^3)^3$ (cf., e.g., \cite{Ha}, \cite[Proposition 2.1]{Sa-Se}). Then the well-posedness result of the Stokes system with $L^{\infty}$ coefficients \eqref{Newtonian-S} follows from Lemma \ref{lemma-a47-1-Stokes}.
\end{proof}

\subsection{\bf Newtonian potential for the Stokes system with $L^{\infty}$ coefficients}

The well-posedness of problem \eqref{Newtonian-S} allows us to define the {\it Newtonian potential for the Stokes system with $L^{\infty}$ coefficients} as follows.
\begin{defn}
\label{Newtonian-S-var-variable}
For any $\boldsymbol\ell \in {\mathcal H}^{-1}({\mathbb R}^3)^3$, we define the {\it Newtonian velocity and pressure potentials} for the Stokes system with $L^{\infty}$ coefficients
as
\begin{align}
\label{Newtonian-S-var}
\boldsymbol{\mathcal N}_{{\mu ;{\mathbb R}^3}}{\boldsymbol\ell }:={\bf u},\ {\mathcal Q}_{{\mu ;{\mathbb R}^3}}{\boldsymbol\ell }:=\pi,
\end{align}
where $({\bf u},\pi )\in {\mathcal H}^{1}({\mathbb R}^3)^3\times L^2({\mathbb R}^3)$ is the unique solution of problem \eqref{Newtonian-S} with the given datum $\boldsymbol\ell $.
\end{defn}
Moreover, the well-posedness of problem \eqref{Newtonian-S} 
yields the continuity of the above operators as stated in the following assertion (cf. also \cite[Lemma A.3]{K-L-M-W} for $\mu =1$).
\begin{lem}
\label{Newtonian-S-var-1}
The Newtonian velocity and pressure potential operators
\begin{align}
\label{Newtonian-S-var-2}
\boldsymbol{\mathcal N}_{{\mu ;{\mathbb R}^3}}:{\mathcal H}^{-1}({\mathbb R}^3)^3\to {\mathcal H}^{1}({\mathbb R}^3)^3,\
{\mathcal Q}_{{\mu ;{\mathbb R}^3}}:{\mathcal H}^{-1}({\mathbb R}^3)^3\to L^2({\mathbb R}^3)
\end{align}
are linear and continuous.
\end{lem}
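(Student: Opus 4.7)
The plan is to deduce the statement directly from Theorem \ref{Stokes-problem} (the well-posedness result), since the operators $\boldsymbol{\mathcal N}_{\mu;{\mathbb R}^3}$ and ${\mathcal Q}_{\mu;{\mathbb R}^3}$ are defined exactly as the solution map of problem \eqref{Newtonian-S}. There are only two things to verify: linearity and boundedness.

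First I would establish linearity. Given $\boldsymbol\ell_1,\boldsymbol\ell_2\in {\mathcal H}^{-1}({\mathbb R}^3)^3$ and scalars $\alpha_1,\alpha_2\in {\mathbb R}$, let $({\bf u}_j,\pi_j)$ denote the unique solution of \eqref{Newtonian-S} with datum $\boldsymbol\ell_j$, $j=1,2$. Since the operator ${\mathcal L}_\mu$ and the divergence are linear, the pair $(\alpha_1{\bf u}_1+\alpha_2{\bf u}_2,\alpha_1\pi_1+\alpha_2\pi_2)$ solves \eqref{Newtonian-S} with right-hand side $\alpha_1\boldsymbol\ell_1+\alpha_2\boldsymbol\ell_2$, and belongs to ${\mathcal H}^1({\mathbb R}^3)^3\times L^2({\mathbb R}^3)$. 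By the uniqueness asserted in Theorem \ref{Stokes-problem}, this must coincide with the solution associated to $\alpha_1\boldsymbol\ell_1+\alpha_2\boldsymbol\ell_2$. Hence both $\boldsymbol{\mathcal N}_{\mu;{\mathbb R}^3}$ and ${\mathcal Q}_{\mu;{\mathbb R}^3}$ are linear.

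For continuity, I would simply invoke the a priori estimate \eqref{estimate-1-wp-S1}. Writing ${\bf u}=\boldsymbol{\mathcal N}_{\mu;{\mathbb R}^3}\boldsymbol\ell$ and $\pi={\mathcal Q}_{\mu;{\mathbb R}^3}\boldsymbol\ell$, the inequality
\begin{equation*}
\|\boldsymbol{\mathcal N}_{\mu;{\mathbb R}^3}\boldsymbol\ell\|_{{\mathcal H}^{1}({\mathbb R}^3)^3}+\|{\mathcal Q}_{\mu;{\mathbb R}^3}\boldsymbol\ell\|_{L^2({\mathbb R}^3)}\leq C_0\|\boldsymbol\ell\|_{{\mathcal H}^{-1}({\mathbb R}^3)^3}
\end{equation*}
implies in particular that each term on the left is dominated by the right-hand side, with $C_0=C_0(\mu)$ independent of $\boldsymbol\ell$. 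This gives the boundedness of both operators between the spaces indicated in \eqref{Newtonian-S-var-2}.

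There is no real obstacle here: the content of the lemma is entirely contained in Theorem \ref{Stokes-problem}, and the proof amounts to packaging the existence, uniqueness, and continuous dependence statement as a mapping property. The only subtlety worth mentioning is to point out that linearity requires uniqueness (not merely existence) of the solution, which is guaranteed by Theorem \ref{Stokes-problem}.
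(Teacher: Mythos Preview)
Your proposal is correct and follows exactly the approach of the paper: the lemma is stated there without a separate proof, with only the remark that the well-posedness of problem \eqref{Newtonian-S} (i.e., Theorem \ref{Stokes-problem}) yields the continuity of the operators. Your write-up simply spells out the standard details (uniqueness $\Rightarrow$ linearity, estimate \eqref{estimate-1-wp-S1} $\Rightarrow$ boundedness) that the paper leaves implicit.
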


\subsection{\bf Single layer potential for the Stokes system with $L^{\infty}$ coefficients}

For a given $\boldsymbol\varphi \in H^{-\frac{1}{2}}(\partial\Omega )^3$, we now consider the following transmission problem for the Stokes system with $L^{\infty}$ coefficients
\begin{eqnarray}
\label{var-Stokes-transmission-sl}
\left\{
\begin{array}{ll}
{{\rm{div}}\left(2\mu {\mathbb E}({\bf u}_{\boldsymbol\varphi })\right)}-\nabla \pi _{\boldsymbol\varphi }={\bf 0} & \mbox{ in } {\mathbb R}^3\setminus \partial \Omega ,
\\
{\rm{div}}\, {\bf u}_{\boldsymbol\varphi }=0 & \mbox{ in } {\mathbb R}^3\setminus \partial \Omega ,\
\\
\left[{\bf t}_{{\mu }}({\bf u}_{{\boldsymbol\varphi}},\pi _{{\boldsymbol\varphi}})\right]
=\boldsymbol\varphi & \mbox{ on } \partial \Omega ,
\end{array}\right.
\end{eqnarray}
and show that this problem has a unique solution $\left({\bf u}_{{\boldsymbol\varphi}},\pi _{{\boldsymbol\varphi}}\right)\!\in \!{{\mathcal H}^1({\mathbb R}^3)^3\!\times \!L^2({\mathbb R}^3)}$
(cf. also \cite[Proposition 5.1]{Sa-Se} for $\mu \!=\!1$). Note that the membership of ${\bf u}_{{\boldsymbol\varphi}}$ in ${\mathcal H}^1({\mathbb R}^3)^3$ implies the transmission condition
\begin{align}
\label{var-Stokes-transmission-sl-tu}
\left[{\gamma }({\bf u}_{{\boldsymbol\varphi}})\right]={\bf 0} \mbox{ on } \partial \Omega \,,
\end{align}
and the first equation in \eqref{var-Stokes-transmission-sl} implies also that the jump $\left[{\bf t}_{{\mu }}({\bf u}_{{\boldsymbol\varphi}},\pi _{{\boldsymbol\varphi}})\right]$ is well defined.
\begin{thm}
\label{slp-var-S-1}
Let $\boldsymbol\varphi \in H^{-\frac{1}{2}}(\partial\Omega )^3$ be given. Then the transmission problem \eqref{var-Stokes-transmission-sl} has the following equivalent mixed variational formulation: Find  $({\bf u}_{\varphi},\pi _{{\boldsymbol\varphi}})\in {\mathcal H}^1({\mathbb R}^3)^3\times L^2({\mathbb R}^3)$ such that
\begin{equation}
\label{single-layer-S-transmission}
\!\!\!\!\!\left\{\begin{array}{lll}
2\langle \mu {\mathbb E}({\bf u}_{\boldsymbol\varphi}),{\mathbb E}({\bf v})\rangle _{{{\mathbb R}^3}}\!-\!\langle \pi _{\boldsymbol\varphi},{\rm{div}}\, {\bf v}\rangle _{{{\mathbb R}^3}}
=\langle \boldsymbol\varphi ,\gamma {\bf v}\rangle _{\partial \Omega },\, \forall \, {\bf v}\in {\mathcal H}^{1}({\mathbb R}^3)^3,\\
\langle {\rm{div}}\, {\bf u}_{{\boldsymbol\varphi}},q\rangle _{{\mathbb R}^3}=0,\ \forall \, q\in L^2({\mathbb R}^3).
\end{array}\right.
\end{equation}
Moreover, problem \eqref{single-layer-S-transmission} is well-posed. Hence \eqref{single-layer-S-transmission} has a unique solution $({\bf u}_{{\boldsymbol\varphi}},\pi _{\boldsymbol\varphi})\!\in \!{\mathcal H}^1({\mathbb R}^3)^3\times L^2({\mathbb R}^3)$, and there exists a constant $C\!=\!C(\mu )$ such that
\begin{align}
\label{estimate-4S-var}
\|{\bf u}_{{\boldsymbol\varphi}}\|_{{\mathcal H}^{1}({\mathbb R}^3)^3}+\|\pi _{\boldsymbol\varphi}\|_{L^2({\mathbb R}^3)}\leq C\|{\boldsymbol\varphi}\|_{H^{-\frac{1}{2}}(\partial \Omega )^3}.
\end{align}
\end{thm}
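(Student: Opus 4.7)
The plan is to reduce Theorem \ref{slp-var-S-1} to a direct application of Lemma \ref{lemma-a47-1-Stokes}, after first establishing the equivalence between the transmission problem \eqref{var-Stokes-transmission-sl} and the mixed variational formulation \eqref{single-layer-S-transmission}. First I would define the linear functional
\begin{equation*}
\boldsymbol\ell_{\boldsymbol\varphi}({\bf v}):=\langle \boldsymbol\varphi, \gamma {\bf v}\rangle_{\partial\Omega},\quad {\bf v}\in {\mathcal H}^1({\mathbb R}^3)^3,
\end{equation*}
and verify its boundedness: since the trace operator $\gamma:{\mathcal H}^1({\mathbb R}^3)^3\to H^{\frac{1}{2}}(\partial\Omega)^3$ is linear and continuous (as recalled just before Lemma \ref{lemma-add-new-1}), we have $|\boldsymbol\ell_{\boldsymbol\varphi}({\bf v})|\le \|\boldsymbol\varphi\|_{H^{-1/2}(\partial\Omega)^3}\|\gamma\|\,\|{\bf v}\|_{{\mathcal H}^1({\mathbb R}^3)^3}$, so $\boldsymbol\ell_{\boldsymbol\varphi}\in {\mathcal H}^{-1}({\mathbb R}^3)^3$ with norm controlled by $\|\boldsymbol\varphi\|_{H^{-1/2}(\partial\Omega)^3}$.

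Next I would establish the equivalence. For the forward implication, suppose $({\bf u}_{\boldsymbol\varphi},\pi_{\boldsymbol\varphi})\in {\mathcal H}^1({\mathbb R}^3)^3\times L^2({\mathbb R}^3)$ solves \eqref{var-Stokes-transmission-sl}. Splitting into $({\bf u}_\pm,\pi_\pm):=({\bf u}_{\boldsymbol\varphi}|_{\Omega_\pm},\pi_{\boldsymbol\varphi}|_{\Omega_\pm})$ we see $({\bf u}_\pm,\pi_\pm,{\bf 0})\in \pmb{\mathcal H}^1(\Omega_\pm,{\boldsymbol{\mathcal L}}_\mu)$, so Lemma \ref{lemma-add-new} applied with ${\bf w}={\bf v}\in {\mathcal H}^1({\mathbb R}^3)^3$ yields
\begin{equation*}
2\langle \mu {\mathbb E}({\bf u}_{\boldsymbol\varphi}),{\mathbb E}({\bf v})\rangle_{{\mathbb R}^3}-\langle \pi_{\boldsymbol\varphi},{\rm div}\,{\bf v}\rangle_{{\mathbb R}^3}=\langle [{\bf t}_\mu({\bf u}_{\boldsymbol\varphi},\pi_{\boldsymbol\varphi})],\gamma {\bf v}\rangle_{\partial\Omega}=\langle \boldsymbol\varphi,\gamma {\bf v}\rangle_{\partial\Omega},
\end{equation*}
which is the first line of \eqref{single-layer-S-transmission}; the second line is immediate from ${\rm div}\,{\bf u}_{\boldsymbol\varphi}=0$. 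Conversely, if $({\bf u}_{\boldsymbol\varphi},\pi_{\boldsymbol\varphi})$ satisfies \eqref{single-layer-S-transmission}, testing with ${\bf v}\in {\mathcal D}(\Omega_\pm)^3$ (extended by zero) and $q\in L^2({\mathbb R}^3)$ supported in $\Omega_\pm$ recovers the first two equations of \eqref{var-Stokes-transmission-sl} in the distributional sense on each side; hence $({\bf u}_\pm,\pi_\pm,{\bf 0})\in \pmb{\mathcal H}^1(\Omega_\pm,{\boldsymbol{\mathcal L}}_\mu)$, and the jump $[{\bf t}_\mu({\bf u}_{\boldsymbol\varphi},\pi_{\boldsymbol\varphi})]$ is well defined. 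Applying Lemma \ref{lemma-add-new} once more and comparing with the variational identity gives
\begin{equation*}
\langle [{\bf t}_\mu({\bf u}_{\boldsymbol\varphi},\pi_{\boldsymbol\varphi})]-\boldsymbol\varphi,\gamma {\bf v}\rangle_{\partial\Omega}=0
\end{equation*}
for all ${\bf v}\in {\mathcal H}^1({\mathbb R}^3)^3$; surjectivity of $\gamma$ onto $H^{\frac{1}{2}}(\partial\Omega)^3$ then yields $[{\bf t}_\mu({\bf u}_{\boldsymbol\varphi},\pi_{\boldsymbol\varphi})]=\boldsymbol\varphi$.

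Having established the equivalence, well-posedness of \eqref{single-layer-S-transmission} follows directly from Lemma \ref{lemma-a47-1-Stokes} applied with the bounded linear form $\boldsymbol\ell=\boldsymbol\ell_{\boldsymbol\varphi}$, which produces a unique $({\bf u}_{\boldsymbol\varphi},\pi_{\boldsymbol\varphi})\in {\mathcal H}^1({\mathbb R}^3)^3\times L^2({\mathbb R}^3)$ satisfying \eqref{estimate-1-wp-S}; combining this with the trace bound gives the desired estimate \eqref{estimate-4S-var} with a new constant $C=C(\mu)$ absorbing $\|\gamma\|$. The only step requiring care is the equivalence argument, in particular the reverse direction where one must invoke the definition of the generalized conormal derivative (Definition \ref{conormal-derivative-var-Brinkman}) to make sense of the jump $[{\bf t}_\mu({\bf u}_{\boldsymbol\varphi},\pi_{\boldsymbol\varphi})]$ before identifying it with $\boldsymbol\varphi$; everything else reduces to linear and continuous maps on Hilbert spaces that have already been set up in the preceding sections.
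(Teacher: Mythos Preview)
Your proposal is correct and follows essentially the same approach as the paper: you define the linear functional $\boldsymbol\ell_{\boldsymbol\varphi}=\gamma^*\boldsymbol\varphi$, invoke Lemma \ref{lemma-a47-1-Stokes} for well-posedness, and use the jump identity \eqref{jump-conormal-derivative-1} (Lemma \ref{lemma-add-new}) for the equivalence with the transmission problem. The only difference is that you spell out both directions of the equivalence in detail, whereas the paper dispatches it in one line by citing density of ${\mathcal D}({\mathbb R}^3)^3$ in ${\mathcal H}^1({\mathbb R}^3)^3$ together with formula \eqref{jump-conormal-derivative-1}.
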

\begin{proof}
The equivalence between the transmission problem \eqref{var-Stokes-transmission-sl} and the variational problem \eqref{single-layer-S-transmission} follows from the density of the space ${\mathcal D}({\mathbb R}^3)^3$ in ${\mathcal H}^1({\mathbb R}^3)^3$ and formula \eqref{jump-conormal-derivative-1}, while the well-posedness of the variational problem \eqref{single-layer-S-transmission} is an immediate consequence of Lemma \ref{lemma-a47-1-Stokes} with the linear form $\boldsymbol\ell :{\mathcal H}^{1}({\mathbb R}^3)^3\to {\mathbb R}$ given by
\begin{align*}
\boldsymbol\ell ({\bf v}):=\langle \boldsymbol\varphi ,\gamma {\bf v}\rangle _{\partial \Omega }=
\langle \gamma ^*\boldsymbol\varphi ,{\bf v}\rangle _{{\mathbb R}^3},\, \forall \, {\bf v}\in {\mathcal H}^{1}({\mathbb R}^3)^3,
\end{align*}
and hence $\boldsymbol\ell =\gamma ^*\boldsymbol\varphi $, where $\gamma ^*:H^{-\frac{1}{2}}(\partial \Omega )^3\to {\mathcal H}^{-1}({\mathbb R}^3)^3$ is the adjoint of the trace operator $\gamma :{\mathcal H}^{1}({\mathbb R}^3)^3\to H^{\frac{1}{2}}(\partial \Omega )^3$.
\end{proof}
Theorem \ref{slp-var-S-1} leads to the following definition (cf. \cite[p. 75]{Sa-Se} for $\mu=1$).
\begin{defn}
\label{s-l-S-variational-variable}
For any ${\boldsymbol\varphi}\in H^{-\frac{1}{2}}(\partial \Omega )^3$, we define the {\it single layer velocity and pressure potentials} for the Stokes system with $L^{\infty}$ coefficients \eqref{Stokes} as
\begin{align}
\label{slp-S-vp-var}
{\bf V}_{{\mu ;\partial \Omega }}{\boldsymbol\varphi}:={\bf u}_{{\boldsymbol\varphi}},\ {\mathcal Q}^s_{{\mu ;\partial \Omega }}{\boldsymbol\varphi}:=\pi _{{\boldsymbol\varphi}},
\end{align}
and the {\it potential operators} ${\boldsymbol{\mathcal V}}_{{\mu ;\partial \Omega }}:{\mathcal V}_{{\mu ;\partial \Omega }}:H^{-\frac{1}{2}}(\partial\Omega )^3\to H^{\frac{1}{2}}(\partial\Omega )^3$ and ${\bf K}_{\mu ;\partial\Omega }^*:{\mathcal V}_{{\mu ;\partial \Omega }}:H^{-\frac{1}{2}}(\partial\Omega )^3\to H^{-\frac{1}{2}}(\partial\Omega )^3$ as
\begin{align}
\label{slp-S-oper-var}
{\boldsymbol{\mathcal V}}_{{{\mu ;\partial \Omega }}}{\boldsymbol\varphi}:=\gamma{\bf u}_{\boldsymbol\varphi},\
{\bf K}_{\mu ;\partial\Omega }^*\boldsymbol\varphi:=
\frac{1}{2}\left(
{\bf t}_{\mu }^+({\bf u}_{\boldsymbol\varphi},\pi_{\boldsymbol\varphi})
+{\bf t}_{\mu }^-({\bf u}_{\boldsymbol\varphi},\pi_{\boldsymbol\varphi})\right),
\end{align}
where $({\bf u}_{{\boldsymbol\varphi}},\pi _{{\boldsymbol\varphi}})$ is the unique solution of problem \eqref{var-Stokes-transmission-sl} in ${{\mathcal H}^{1}({\mathbb R}^3)^3\times L^2({\mathbb R}^3)}$.
\end{defn}
The next result shows the continuity of single layer velocity and pressure operators for the variable coefficient Stokes system (cf. \cite[Proposition 5.2]{Sa-Se}, \cite[Lemma A.4, (A.10), (A.12)]{K-L-M-W} and \cite[Theorem 10.5.3]{M-W} in the case $\mu =1$).
\begin{lem}
\label{continuity-sl-S-h-var}
The following operators are linear and continuous
\begin{align}
\label{s-l-S-v1-var}
&{\bf V}_{{\mu ;\partial \Omega }}:H^{-\frac{1}{2}}(\partial\Omega )^3\to {\mathcal H}^{1}({\mathbb R}^3)^3,\
{\mathcal Q}_{{\mu ;\partial \Omega }}^s:H^{-\frac{1}{2}}(\partial\Omega )^3\to L^2({\mathbb R}^3),
\\
\label{s-l-S-v2-var}
&\boldsymbol{\mathcal V}_{{\mu ;\partial \Omega }}:H^{-\frac{1}{2}}(\partial\Omega )^3\to H^{\frac{1}{2}}(\partial\Omega )^3,\, {\bf K}_{{\mu ;\partial \Omega }}^*:H^{-\frac{1}{2}}(\partial\Omega )^3\to H^{-\frac{1}{2}}(\partial\Omega )^3.
\end{align}
\end{lem}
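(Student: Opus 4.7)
The entire lemma will be bootstrapped from the well-posedness estimate \eqref{estimate-4S-var} of Theorem \ref{slp-var-S-1}, which already gives
$\|{\bf u}_{\boldsymbol\varphi}\|_{{\mathcal H}^{1}({\mathbb R}^3)^3}+\|\pi _{\boldsymbol\varphi}\|_{L^2({\mathbb R}^3)}\leq C\|\boldsymbol\varphi\|_{H^{-\frac{1}{2}}(\partial \Omega )^3}$. Linearity of the four maps is immediate: the transmission problem \eqref{var-Stokes-transmission-sl} depends linearly on its datum, uniqueness of the solution in ${\mathcal H}^1({\mathbb R}^3)^3\times L^2({\mathbb R}^3)$ makes $\boldsymbol\varphi\mapsto({\bf u}_{\boldsymbol\varphi},\pi_{\boldsymbol\varphi})$ a well-defined linear map, and the remaining operations (trace, restriction to $\Omega_\pm$, conormal derivative, half-sum) are all linear. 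Only the continuity requires a short verification.

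For ${\bf V}_{\mu;\partial\Omega}$ and ${\mathcal Q}^s_{\mu;\partial\Omega}$, the stated continuity is exactly \eqref{estimate-4S-var}. For the boundary single layer operator, note that $\boldsymbol{\mathcal V}_{\mu;\partial\Omega}=\gamma\circ{\bf V}_{\mu;\partial\Omega}$ by \eqref{slp-S-oper-var}, so continuity from $H^{-\frac{1}{2}}(\partial\Omega)^3$ into $H^{\frac{1}{2}}(\partial\Omega)^3$ follows by composing the already established bound for ${\bf V}_{\mu;\partial\Omega}$ with the bounded trace $\gamma:{\mathcal H}^1({\mathbb R}^3)^3\to H^{\frac{1}{2}}(\partial\Omega)^3$ (cited just after Lemma \ref{trace-operator1}).

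The only step that genuinely needs attention is the continuity of ${\bf K}^*_{\mu;\partial\Omega}$, since it involves the one-sided conormal derivatives. By construction the triples $({\bf u}_{\boldsymbol\varphi}|_{\Omega_\pm},\pi_{\boldsymbol\varphi}|_{\Omega_\pm},{\bf 0})$ belong to ${\pmb{\mathcal H}}^1(\Omega_\pm,\boldsymbol{\mathcal L}_\mu)$, so ${\bf t}^{\pm}_\mu({\bf u}_{\boldsymbol\varphi},\pi_{\boldsymbol\varphi})$ is defined through \eqref{conormal-derivative-var-Brinkman-3} with $\tilde{\bf f}_\pm={\bf 0}$. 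Testing that identity against an arbitrary $\Phi\in H^{\frac{1}{2}}(\partial\Omega)^3$, applying Cauchy--Schwarz together with $\mu\leq c_\mu$ and the boundedness of the right inverse $\gamma_\pm^{-1}:H^{\frac{1}{2}}(\partial\Omega)^3\to{\mathcal H}^1(\Omega_\pm)^3$, one obtains
$|\langle{\bf t}^{\pm}_\mu({\bf u}_{\boldsymbol\varphi},\pi_{\boldsymbol\varphi}),\Phi\rangle_{\partial\Omega}|\leq C\bigl(\|{\bf u}_{\boldsymbol\varphi}\|_{{\mathcal H}^1(\Omega_\pm)^3}+\|\pi_{\boldsymbol\varphi}\|_{L^2(\Omega_\pm)}\bigr)\|\Phi\|_{H^{\frac{1}{2}}(\partial\Omega)^3}$,
where the constant $C$ is independent of $\boldsymbol\varphi$ (the left-hand side, and hence the resulting bound, does not depend on the specific choice of $\gamma_\pm^{-1}$). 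Taking the supremum over $\Phi$ yields $\|{\bf t}^{\pm}_\mu({\bf u}_{\boldsymbol\varphi},\pi_{\boldsymbol\varphi})\|_{H^{-\frac{1}{2}}(\partial\Omega)^3}\leq C'\bigl(\|{\bf u}_{\boldsymbol\varphi}\|_{{\mathcal H}^1({\mathbb R}^3)^3}+\|\pi_{\boldsymbol\varphi}\|_{L^2({\mathbb R}^3)}\bigr)$, which combined with \eqref{estimate-4S-var} and the identity ${\bf K}^*_{\mu;\partial\Omega}=\tfrac{1}{2}({\bf t}^+_\mu+{\bf t}^-_\mu)$ delivers the last claim. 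The main (mild) obstacle is thus checking that the conormal-derivative operators act continuously from ${\pmb{\mathcal H}}^1(\Omega_\pm,\boldsymbol{\mathcal L}_\mu)$ to $H^{-\frac{1}{2}}(\partial\Omega)^3$ with norm independent of the chosen right inverse; once that is settled, everything else is composition of bounded linear maps.
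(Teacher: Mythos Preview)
Your argument is correct and follows essentially the same route as the paper, which simply invokes the well-posedness of the transmission problem \eqref{var-Stokes-transmission-sl} together with Definition \ref{s-l-S-variational-variable}. You have merely made explicit the continuity of the conormal derivatives (implicit in Definition \ref{conormal-derivative-var-Brinkman}) that the paper takes for granted.
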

\begin{proof}
The continuity of operators \eqref{s-l-S-v1-var} and \eqref{s-l-S-v2-var} follows from the well-posedness of the transmission problem \eqref{var-Stokes-transmission-sl} and Definition \ref{s-l-S-variational-variable}. 
\end{proof}
The next result yields the jump relations of the single layer potential and its conormal derivative across $\partial \Omega $ (see also \cite[Proposition 5.3]{Sa-Se} for $\mu =1$).
\begin{lem}
\label{jump-s-l}
Let ${\boldsymbol\varphi}\!\in \!H^{-\frac{1}{2}}(\partial \Omega )^3$. Then almost everywhere on $\partial \Omega $,
\begin{align}
\label{sl-identities-var}
&\!\!\!\!\!\!\left[\gamma {\bf V}_{\mu ;\partial\Omega }{\boldsymbol\varphi}\right]={\bf 0},\\
\label{sl-identities-var-1}
&\!\!\!\!\!\!\left[{\bf t}_{\mu }\left({\bf V}_{\mu ;\partial\Omega }{\boldsymbol\varphi},{\mathcal Q}_{\mu ;\partial\Omega }^s{\boldsymbol\varphi}\right)\right]\!=\!\boldsymbol\varphi ,\, {\bf t}_{\mu }^{\pm }\left({\bf V}_{\mu ;\partial\Omega }{\boldsymbol\varphi},{\mathcal Q}_{\mu ;\partial\Omega }^s{\boldsymbol\varphi}\right)\!=\!\pm \frac{1}{2}\boldsymbol\varphi \!+\!{\bf K}_{\mu ;\partial\Omega }^*\boldsymbol\varphi .
\end{align}
\end{lem}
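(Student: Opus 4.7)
The plan is to derive all three assertions directly from the transmission problem \eqref{var-Stokes-transmission-sl}, Theorem \ref{slp-var-S-1} and Definition \ref{s-l-S-variational-variable}. No new analysis is required beyond what has been established; the result is essentially a bookkeeping step that records the boundary behaviour already encoded in the defining transmission problem.

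First, for the identity $[\gamma \mathbf{V}_{\mu;\partial\Omega}\boldsymbol\varphi]=\mathbf{0}$, I would invoke the fact that, by Theorem \ref{slp-var-S-1}, the velocity component $\mathbf{u}_{\boldsymbol\varphi}:=\mathbf{V}_{\mu;\partial\Omega}\boldsymbol\varphi$ of the solution of \eqref{var-Stokes-transmission-sl} belongs to $\mathcal{H}^{1}(\mathbb{R}^{3})^{3}$. Because the trace operator $\gamma:\mathcal{H}^{1}(\mathbb{R}^{3})^{3}\to H^{\frac{1}{2}}(\partial\Omega)^{3}$ is a well-defined single-valued continuous map, the interior and exterior traces $\gamma_{\pm}(\mathbf{u}_{\boldsymbol\varphi}|_{\Omega_{\pm}})$ coincide with $\gamma \mathbf{u}_{\boldsymbol\varphi}$, whence $[\gamma \mathbf{V}_{\mu;\partial\Omega}\boldsymbol\varphi]=\boldsymbol 0$ on $\partial\Omega$. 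This is the remark already made in \eqref{var-Stokes-transmission-sl-tu}.

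For the jump formula $[\mathbf{t}_{\mu}(\mathbf{V}_{\mu;\partial\Omega}\boldsymbol\varphi,\mathcal{Q}^{s}_{\mu;\partial\Omega}\boldsymbol\varphi)]=\boldsymbol\varphi$, I would use the equivalence between the transmission problem \eqref{var-Stokes-transmission-sl} and its variational formulation \eqref{single-layer-S-transmission} stated in Theorem \ref{slp-var-S-1}. Concretely, given any $\mathbf{w}\in \mathcal{H}^{1}(\mathbb{R}^{3})^{3}$, applying Lemma \ref{lemma-add-new} to $(\mathbf{u}_{\boldsymbol\varphi},\pi_{\boldsymbol\varphi})$ (which lies in $\mathbf{\mathcal{H}}^{1}(\Omega_{\pm},\boldsymbol{\mathcal{L}}_{\mu})$ with zero right-hand side because ${\mathcal L}_\mu (\mathbf{u}_{\boldsymbol\varphi},\pi_{\boldsymbol\varphi})=\mathbf{0}$ in $\mathbb{R}^{3}\setminus\partial\Omega$) gives
\begin{equation*}
\big\langle [\mathbf{t}_{\mu}(\mathbf{u}_{\boldsymbol\varphi},\pi_{\boldsymbol\varphi})],\gamma\mathbf{w}\big\rangle_{\partial\Omega}
=2\langle \mu\mathbb{E}(\mathbf{u}_{\boldsymbol\varphi}),\mathbb{E}(\mathbf{w})\rangle_{\mathbb{R}^{3}}
-\langle \pi_{\boldsymbol\varphi},\mathrm{div}\,\mathbf{w}\rangle_{\mathbb{R}^{3}},
\end{equation*}
and the right-hand side equals $\langle \boldsymbol\varphi,\gamma\mathbf{w}\rangle_{\partial\Omega}$ by the first equation in \eqref{single-layer-S-transmission}. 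Since $\gamma:\mathcal{H}^{1}(\mathbb{R}^{3})^{3}\to H^{\frac{1}{2}}(\partial\Omega)^{3}$ is surjective (cf. \eqref{ext-trace} and the discussion following it), the identity holds in $H^{-\frac{1}{2}}(\partial\Omega)^{3}$, proving the first assertion in \eqref{sl-identities-var-1}.

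Finally, the one-sided formulas follow by purely algebraic combination: writing $J:=[\mathbf{t}_{\mu}(\mathbf{u}_{\boldsymbol\varphi},\pi_{\boldsymbol\varphi})]=\mathbf{t}_{\mu}^{+}(\mathbf{u}_{\boldsymbol\varphi},\pi_{\boldsymbol\varphi})-\mathbf{t}_{\mu}^{-}(\mathbf{u}_{\boldsymbol\varphi},\pi_{\boldsymbol\varphi})=\boldsymbol\varphi$ and $S:=\mathbf{t}_{\mu}^{+}(\mathbf{u}_{\boldsymbol\varphi},\pi_{\boldsymbol\varphi})+\mathbf{t}_{\mu}^{-}(\mathbf{u}_{\boldsymbol\varphi},\pi_{\boldsymbol\varphi})=2\mathbf{K}^{*}_{\mu;\partial\Omega}\boldsymbol\varphi$ (the latter by Definition \ref{s-l-S-variational-variable}), I would solve the two-by-two linear system to obtain $\mathbf{t}_{\mu}^{\pm}(\mathbf{u}_{\boldsymbol\varphi},\pi_{\boldsymbol\varphi})=\tfrac{1}{2}(S\pm J)=\pm\tfrac{1}{2}\boldsymbol\varphi+\mathbf{K}^{*}_{\mu;\partial\Omega}\boldsymbol\varphi$. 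The only subtlety in the whole argument, and the one I would state explicitly, is the surjectivity of the trace operator at the step that upgrades the variational identity to an equality in the dual space $H^{-\frac{1}{2}}(\partial\Omega)^{3}$; apart from that, no genuine obstacle arises.
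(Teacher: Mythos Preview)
Your proposal is correct and follows essentially the same approach as the paper: the paper's proof is a one-sentence appeal to Definition \ref{s-l-S-variational-variable}, the transmission condition \eqref{var-Stokes-transmission-sl-tu}, and the third line of \eqref{var-Stokes-transmission-sl}, and you have simply spelled these out in detail. The only difference is that for the conormal jump you re-derive $[\mathbf{t}_{\mu}(\mathbf{u}_{\boldsymbol\varphi},\pi_{\boldsymbol\varphi})]=\boldsymbol\varphi$ from the variational identity \eqref{single-layer-S-transmission} via Lemma \ref{lemma-add-new} and trace surjectivity, whereas the paper cites the transmission condition in \eqref{var-Stokes-transmission-sl} directly (which is legitimate since Theorem \ref{slp-var-S-1} has already established the equivalence); this makes your detour through surjectivity unnecessary, though not incorrect.
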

\begin{proof}
Formulas \eqref{sl-identities-var} and \eqref{sl-identities-var-1} follow from Definition \ref{s-l-S-variational-variable} and the transmission condition in \eqref{var-Stokes-transmission-sl-tu}, as well as the transmission condition in the third line of \eqref{var-Stokes-transmission-sl}.
\end{proof}
Let ${\mathbb R}\boldsymbol \nu =\{c\boldsymbol \nu :c\in {\mathbb R}\}$. Let ${\rm{Ker}}\left\{T:X\to Y\right\}:=\left\{x\in X:T(x)=0\right\}$ denote the null space of the map $T:X\to Y$.
We next obtain the main properties of the single layer potential operator (cf., e.g., \cite[Theorem 10.5.3]{M-W}, and \cite[Proposition 3.3(c)]{B-H} and \cite[Proposition 5.4]{Sa-Se} for $\mu =1$ and $\alpha \in [0,\infty )$).
\begin{lem}
The following properties hold
\begin{align}
\label{sl-n}
&{\bf V}_{\mu ;\partial \Omega }{\boldsymbol\nu }={\bf 0} \mbox{ in } {\mathbb R}^3,\,  {\mathcal Q}^s_{\mu ;\partial \Omega }{\boldsymbol\nu }=-\chi _{\Omega _{+}}\\
\label{kernel-sl-v}
&{{\rm{Ker}}\left\{\boldsymbol{\mathcal V}_{\mu ;\partial\Omega }:H^{-\frac{1}{2}}(\partial\Omega )^3\to H^{\frac{1}{2}}(\partial\Omega )^3\right\}={\mathbb R}\boldsymbol \nu ,}\\
\label{range-sl-v}
&\boldsymbol{\mathcal V}_{\mu ;\partial\Omega }\boldsymbol \varphi \in H_{\boldsymbol \nu }^{\frac{1}{2}}(\partial \Omega )^3,\, \forall \, \boldsymbol \varphi \in H^{-\frac{1}{2}}(\partial \Omega )^3,
\end{align}
where 
$\chi _{\Omega _{+}}\!=\!1$ in $\Omega _{+}$, $\chi _{\Omega _{+}}\!=\!0$ in $\Omega _{-}$, and
\begin{align}
\label{orth-nu}
H_{\boldsymbol \nu }^{\frac{1}{2}}(\partial \Omega )^3:=\big\{\boldsymbol \phi \in H^{\frac{1}{2}}(\partial \Omega )^3: \langle \boldsymbol \nu ,\boldsymbol \phi \rangle _{\partial \Omega }=0\big\}.
\end{align}
\end{lem}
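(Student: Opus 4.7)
The three claims are linked, and the plan is to dispatch them in order. For \eqref{sl-n}, I would exhibit the pair $(\mathbf{u},\pi) = (\mathbf{0}, -\chi_{\Omega_+}) \in \mathcal{H}^{1}(\mathbb{R}^3)^3 \times L^2(\mathbb{R}^3)$ as a candidate solution of the transmission problem \eqref{var-Stokes-transmission-sl} with datum $\boldsymbol\varphi = \boldsymbol\nu$, and then invoke the uniqueness part of Theorem~\ref{slp-var-S-1}. The PDE and divergence conditions hold trivially because $\mathbb{E}(\mathbf{0}) = 0$ and $\chi_{\Omega_+}$ is locally constant off $\partial\Omega$. The substantive step is the jump relation: using Definition~\ref{conormal-derivative-var-Brinkman}, for every $\Phi \in H^{\frac{1}{2}}(\partial\Omega)^3$ the divergence theorem gives
\[
\langle \mathbf{t}_\mu^+(\mathbf{0},-\chi_{\Omega_+}),\Phi\rangle_{\partial\Omega} = \int_{\Omega_+} \mathrm{div}(\gamma_+^{-1}\Phi)\,dx = \langle \boldsymbol\nu,\Phi\rangle_{\partial\Omega},
\]
while $\mathbf{t}_\mu^-(\mathbf{0},0) = \mathbf{0}$, so $[\mathbf{t}_\mu(\mathbf{0},-\chi_{\Omega_+})] = \boldsymbol\nu$ as required.

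For \eqref{kernel-sl-v}, the inclusion $\mathbb{R}\boldsymbol\nu \subseteq \mathrm{Ker}\,\boldsymbol{\mathcal V}_{\mu;\partial\Omega}$ is immediate upon applying $\gamma$ to \eqref{sl-n}. For the reverse inclusion, I would assume $\gamma\mathbf{u}_{\boldsymbol\varphi} = \mathbf{0}$ and test the first line of \eqref{single-layer-S-transmission} with $\mathbf{v} = \mathbf{u}_{\boldsymbol\varphi}$; combining this with $\mathrm{div}\,\mathbf{u}_{\boldsymbol\varphi} = 0$ yields
\[
2\int_{\mathbb{R}^3}\mu\,|\mathbb{E}(\mathbf{u}_{\boldsymbol\varphi})|^2\,dx = \langle \boldsymbol\varphi,\gamma\mathbf{u}_{\boldsymbol\varphi}\rangle_{\partial\Omega} = 0.
\]
By \eqref{mu} and the Korn inequality \eqref{Korn3-R3}, it follows that $\nabla\mathbf{u}_{\boldsymbol\varphi} = \mathbf{0}$ a.e.\ in $\mathbb{R}^3$; the $L^{2}(\rho^{-1};\mathbb{R}^3)$-membership built into $\mathcal{H}^{1}(\mathbb{R}^3)^3$ then forces the resulting constant to be zero, since $\rho^{-2}$ is not integrable at infinity. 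Substituting $\mathbf{u}_{\boldsymbol\varphi} \equiv \mathbf{0}$ back into the Stokes equation gives $\nabla\pi_{\boldsymbol\varphi} = \mathbf{0}$ in $\mathbb{R}^3\setminus\partial\Omega$, so $\pi_{\boldsymbol\varphi}$ is constant in each of $\Omega_+$ and $\Omega_-$; the $L^2(\mathbb{R}^3)$-regularity kills the constant on the unbounded component, leaving $\pi_{\boldsymbol\varphi} = c\chi_{\Omega_+}$ for some $c \in \mathbb{R}$. Repeating the jump computation from the previous paragraph then gives $\boldsymbol\varphi = [\mathbf{t}_\mu(\mathbf{0}, c\chi_{\Omega_+})] = -c\boldsymbol\nu \in \mathbb{R}\boldsymbol\nu$.

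Finally, for \eqref{range-sl-v}, since $[\gamma \mathbf{u}_{\boldsymbol\varphi}] = \mathbf{0}$ I can identify $\boldsymbol{\mathcal V}_{\mu;\partial\Omega}\boldsymbol\varphi = \gamma_+\bigl(\mathbf{u}_{\boldsymbol\varphi}|_{\Omega_+}\bigr)$ and apply the divergence theorem to the divergence-free field $\mathbf{u}_{\boldsymbol\varphi}|_{\Omega_+} \in H^1(\Omega_+)^3$, producing
\[
\langle \boldsymbol\nu, \boldsymbol{\mathcal V}_{\mu;\partial\Omega}\boldsymbol\varphi\rangle_{\partial\Omega} = \int_{\Omega_+} \mathrm{div}\,\mathbf{u}_{\boldsymbol\varphi}\,dx = 0.
\]
The main technical point I expect to encounter is the pressure argument inside \eqref{kernel-sl-v}: one has to lean essentially on the global $L^2$ pressure regularity supplied by Theorem~\ref{slp-var-S-1} in order to eliminate the exterior constant. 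Once the right test function $\mathbf{v} = \mathbf{u}_{\boldsymbol\varphi}$ has been chosen, the remaining ingredients are routine consequences of the Korn inequality and the Green identities established earlier in the paper.
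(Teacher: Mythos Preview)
Your proposal is correct and follows essentially the same route as the paper: exhibit $(\mathbf{0},-\chi_{\Omega_+})$ as the solution of the transmission problem for datum $\boldsymbol\nu$ and appeal to uniqueness, then for the kernel test with ${\bf v}={\bf u}_{\boldsymbol\varphi}$ to force ${\mathbb E}({\bf u}_{\boldsymbol\varphi})=0$ and hence ${\bf u}_{\boldsymbol\varphi}=0$, $\pi_{\boldsymbol\varphi}=c\chi_{\Omega_+}$, and finally use the divergence theorem on $\Omega_+$ for the range statement. The only cosmetic difference is that the paper packages the jump computations through Lemma~\ref{lemma-add-new} (formula \eqref{jump-conormal-derivative-1}) rather than the raw Definition~\ref{conormal-derivative-var-Brinkman}, and it states ${\bf u}_{\boldsymbol\varphi_0}=0$, $\pi_{\boldsymbol\varphi_0}=c\chi_{\Omega_+}$ without spelling out the Korn and $L^2$-integrability arguments that you (rightly) make explicit.
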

\begin{proof}
First, we consider the transmission problem \eqref{var-Stokes-transmission-sl} with the datum $\boldsymbol \varphi \!=\!\boldsymbol \nu \!\in \!H^{-\frac{1}{2}}(\partial \Omega )^3$. Then the solution of this problem is given by
\begin{align}
\label{solution-nu}
\left({\bf u}_{\boldsymbol \nu },\pi _{\boldsymbol \nu }\right)=\left({\bf 0},-\chi _{\Omega _{+}}\right)\in {\mathcal H}^{1}({\mathbb R}^3)^3\times L^2({\mathbb R}^3).
\end{align}
Indeed, the pair $\left({\bf u}_{\boldsymbol \nu },\pi _{\boldsymbol \nu }\right)$ satisfies the equations and the transmission condition in \eqref{var-Stokes-transmission-sl}, as well as the transmission condition \eqref{var-Stokes-transmission-sl-tu}, and, in view of formula \eqref{jump-conormal-derivative-1} and the divergence theorem,
\begin{align}
\langle [{\bf t}_{\mu }({\bf u}_{\boldsymbol \nu },\pi_{\boldsymbol \nu })],\gamma {\bf v}\rangle _{\partial \Omega }=-\langle \pi _{\boldsymbol \nu },{\rm{div}}\, {\bf v}\rangle _{{\mathbb R}^3}=\langle {\boldsymbol \nu },\gamma {\bf v}\rangle _{\partial \Omega },\, \forall \, {\bf v}\in {\mathcal D}({\mathbb R}^3)^3.
\end{align}
Then by formula \eqref{spaces-Sobolev-inverse}, Lemma \ref{trace-operator1}, the dense embedding of the space ${\mathcal D}({\mathbb R}^3)^3$ in $H^1({\mathbb R}^3)^3$, and the above equality, we obtain that $\langle [{\bf t}_{\mu }({\bf u}_{\boldsymbol \nu },\pi_{\boldsymbol \nu })],\Phi \rangle _{\partial \Omega }=\langle {\boldsymbol \nu },\Phi \rangle _{\partial \Omega }$ for any $\Phi \in H^{\frac{1}{2}}(\partial \Omega )^3$. Hence, $[{\bf t}_{\mu }({\bf u}_{\boldsymbol \nu },\pi_{\boldsymbol \nu })]=\boldsymbol \nu $, as asserted.
Then Definition \ref{s-l-S-variational-variable} implies relations \eqref{sl-n}. Moreover, ${\boldsymbol{\mathcal V}}_{\mu ;\partial \Omega }{\boldsymbol\nu }={\bf 0}$, i.e.,
${\mathbb R}{\boldsymbol\nu }\subseteq {\rm{Ker}}\big\{{\boldsymbol{\mathcal V}}_{\mu ;\partial\Omega }:H^{-\frac{1}{2}}(\partial\Omega )^3\to H^{\frac{1}{2}}(\partial\Omega )^3\big\}.$

Now let $\boldsymbol \varphi _0\in {\rm{Ker}}\left\{{\boldsymbol{\mathcal V}}_{\mu ;\partial\Omega }
:H^{-\frac{1}{2}}(\partial\Omega )^3\to H^{\frac{1}{2}}(\partial\Omega )^3\right\}$. Let
$({\bf u}_{\boldsymbol \varphi _0},\pi_{\boldsymbol \varphi _0})=\big({\bf V}_{\mu ;\partial \Omega }{\boldsymbol\varphi _0},{\mathcal Q}^s_{\mu ;\partial \Omega }{\boldsymbol\varphi _0}\big)\in \!{{\mathcal H}^1({\mathbb R}^3)^3\!\times \!L^2({\mathbb R}^3)}$ be the unique solution of problem \eqref{var-Stokes-transmission-sl} with datum $\boldsymbol \varphi _0$. Since $\gamma {\bf u}_{\boldsymbol \varphi _0}={\bf 0}$ on $\partial \Omega $, formula \eqref{jump-conormal-derivative-1} yields that
\begin{align}
0=\langle [{\bf t}_{\mu }({\bf u}_{\boldsymbol \varphi _0},\pi_{\boldsymbol \varphi _0})],\gamma {\bf u}_{\boldsymbol \varphi _0}\rangle _{\partial \Omega }=a_{\mu }\left({\bf u}_{\boldsymbol \varphi _0},{\bf u}_{\boldsymbol \varphi _0}\right),
\end{align}
and hence ${\bf u}_{\boldsymbol \varphi _0}\!=\!{\bf 0},\, \pi _{\boldsymbol \varphi _0}\!=\!c\chi _{\Omega _{+}}$ in ${\mathbb R}^3$, where $c\!\in \!{\mathbb R}$. In view of formula \eqref{jump-conormal-derivative-1},
\begin{align}
\langle [{\bf t}_{\mu }({\bf u}_{\boldsymbol \varphi _0},\pi_{\boldsymbol \varphi _0})],\gamma {\bf w}\rangle _{\partial \Omega }=-\langle \pi _{\boldsymbol \varphi _0},{\rm{div}}\, {\bf w}\rangle _{{\mathbb R}^3}=-c\langle \boldsymbol \nu ,\gamma {\bf w}\rangle _{\partial \Omega },\, \forall \, {\bf w}\in {\mathcal D}({\mathbb R}^3)^3,\nonumber
\end{align}
and, thus, $\boldsymbol \varphi _0=[{\bf t}_{\mu }({\bf u}_{\boldsymbol \varphi },\pi_{\boldsymbol \varphi _0})]=-c\boldsymbol \nu $. Hence, formula \eqref{kernel-sl-v} follows.

Now let $\boldsymbol\varphi \in H^{-\frac{1}{2}}(\partial\Omega )^3$. By using the first formula in \eqref{slp-S-oper-var},
we obtain for any $\boldsymbol\varphi \in H^{-\frac{1}{2}}(\partial\Omega )^3$ that
$\left\langle {\boldsymbol{\mathcal V}}_{\mu ;\partial\Omega }\boldsymbol\varphi ,\boldsymbol\nu \right\rangle _{\partial \Omega }=
\left\langle\gamma\mathbf u_{\boldsymbol\varphi},\boldsymbol\nu \right\rangle _{\partial \Omega }
=\left\langle{\rm{div}}\,\mathbf u_{\boldsymbol\varphi}, 1 \right\rangle_{\Omega }=0,$
where ${\bf u}_{\boldsymbol\varphi}={\bf V}_{\mu ;\partial\Omega }\boldsymbol\varphi $.
Thus, we get relation \eqref{range-sl-v}.
\end{proof}

Next we use the notation {$\left[\!\left[\cdot \right]\!\right]$} for the equivalence classes of the space $H^{-\frac{1}{2}}(\partial\Omega ,\Lambda ^1TM)/{\mathbb R}\boldsymbol\nu$. Thus, any $\left[\!\left[\boldsymbol \varphi \right]\!\right]\!\in \!{H^{-\frac{1}{2}}(\partial\Omega )^3/{{\mathbb R}\boldsymbol\nu}}$ can be written as $\left[\!\left[\boldsymbol \varphi \right]\!\right]\!=\!\boldsymbol \varphi \!+\!{\mathbb R}\boldsymbol\nu $, where $\boldsymbol \varphi \in H^{-\frac{1}{2}}(\partial\Omega )^3$.

Exploiting properties \eqref{kernel-sl-v} and \eqref{range-sl-v}, we now show the following invertibility result (cf. \cite[Theorem 10.5.3]{M-W}, \cite[Proposition 3.3(d)]{B-H}, \cite[Proposition 5.5]{Sa-Se} for $\mu =1$ and $\alpha \geq 0$ constant).
\begin{lem}
\label{isom-sl-v}
The following operator is an isomorphism
\begin{align}
\label{sl-v-isom}
\boldsymbol{\mathcal V}_{\mu ;\partial\Omega }:H^{-\frac{1}{2}}(\partial\Omega )^3/{\mathbb R}\nu \to H_{\boldsymbol \nu }^{\frac{1}{2}}(\partial\Omega )^3.
\end{align}
\end{lem}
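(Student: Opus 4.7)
The plan is to establish the isomorphism \eqref{sl-v-isom} via the Lax--Milgram theorem applied to a symmetric bilinear form induced by $\boldsymbol{\mathcal V}_{\mu ;\partial\Omega }$ on the Hilbert quotient ${\mathcal X}:=H^{-\frac{1}{2}}(\partial\Omega )^3/{\mathbb R}\boldsymbol\nu $. The key observation is the canonical identification ${\mathcal X}^{\,*}=H_{\boldsymbol\nu }^{\frac{1}{2}}(\partial\Omega )^3$ as a closed subspace of $H^{\frac{1}{2}}(\partial\Omega)^3$ via \eqref{orth-nu}. From \eqref{kernel-sl-v}, \eqref{range-sl-v}, and Lemma \ref{continuity-sl-S-h-var}, the operator $\boldsymbol{\mathcal V}_{\mu ;\partial\Omega }$ descends to a well-defined, continuous, injective map from ${\mathcal X}$ into $H_{\boldsymbol\nu }^{\frac{1}{2}}(\partial\Omega )^3$, so only surjectivity with continuous inverse remains.

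Introducing $B(\left[\!\left[\boldsymbol\varphi \right]\!\right],\left[\!\left[\boldsymbol\eta \right]\!\right]) := \langle \boldsymbol{\mathcal V}_{\mu ;\partial\Omega }\boldsymbol\varphi ,\boldsymbol\eta \rangle _{\partial\Omega }$, which is well-defined on ${\mathcal X}\times{\mathcal X}$ and bounded by Lemma \ref{continuity-sl-S-h-var}, I would next establish symmetry by applying Lemma \ref{lemma-add-new} with $({\bf u}_\pm ,\pi _\pm )=({\bf V}_{\mu ;\partial\Omega }\boldsymbol\varphi |_{\Omega _\pm},{\mathcal Q}^s_{\mu ;\partial\Omega }\boldsymbol\varphi |_{\Omega _\pm})$ and test field ${\bf w}={\bf V}_{\mu ;\partial\Omega }\boldsymbol\eta $. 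Combined with \eqref{sl-identities-var-1}, the divergence-free property of ${\bf V}_{\mu ;\partial\Omega }\boldsymbol\eta $, and \eqref{slp-S-oper-var}, this yields
\begin{align*}
B\bigl(\left[\!\left[\boldsymbol\varphi \right]\!\right],\left[\!\left[\boldsymbol\eta \right]\!\right]\bigr)= 2\langle \mu \mathbb E({\bf V}_{\mu ;\partial\Omega }\boldsymbol\varphi ),\mathbb E({\bf V}_{\mu ;\partial\Omega }\boldsymbol\eta )\rangle _{{\mathbb R}^3},
\end{align*}
which is manifestly symmetric. Taking $\boldsymbol\eta =\boldsymbol\varphi $ and combining \eqref{mu}, the Korn inequality \eqref{Korn3-R3}, and the seminorm/norm equivalence on ${\mathcal H}^1({\mathbb R}^3)^3$ already gives the preliminary estimate $B(\left[\!\left[\boldsymbol\varphi \right]\!\right],\left[\!\left[\boldsymbol\varphi \right]\!\right])\ge c_1\|{\bf V}_{\mu ;\partial\Omega }\boldsymbol\varphi \|_{{\mathcal H}^1({\mathbb R}^3)^3}^2$.

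The main obstacle is converting this into coercivity in the quotient norm, i.e.\ establishing $\|\left[\!\left[\boldsymbol\varphi \right]\!\right]\|_{\mathcal X}\le C\|{\bf V}_{\mu ;\partial\Omega }\boldsymbol\varphi \|_{{\mathcal H}^1({\mathbb R}^3)^3}$. Writing $\boldsymbol\varphi =[{\bf t}_\mu({\bf V}_{\mu ;\partial\Omega }\boldsymbol\varphi ,{\mathcal Q}^s_{\mu ;\partial\Omega }\boldsymbol\varphi )]$ from \eqref{sl-identities-var-1}, observing via \eqref{sl-n} and linearity that replacing $\boldsymbol\varphi $ by $\boldsymbol\varphi +c\boldsymbol\nu $ shifts the pressure by $-c\chi _{\Omega _+}$ while leaving ${\bf V}_{\mu ;\partial\Omega }\boldsymbol\varphi $ unchanged, and exploiting the continuity of ${\bf t}_\mu ^\pm$ from Definition \ref{conormal-derivative-var-Brinkman}, the quotient norm is dominated by
\begin{align*}
\|\left[\!\left[\boldsymbol\varphi \right]\!\right]\|_{\mathcal X}\le C\Bigl(\|{\bf V}_{\mu ;\partial\Omega }\boldsymbol\varphi \|_{{\mathcal H}^1({\mathbb R}^3)^3}+\inf _{c\in {\mathbb R}}\|{\mathcal Q}^s_{\mu ;\partial\Omega }\boldsymbol\varphi +c\chi _{\Omega _+}\|_{L^2({\mathbb R}^3)}\Bigr).
\end{align*}
The remaining pressure infimum is then absorbed into the velocity norm via a Ne\v{c}as-type estimate on each of $\Omega _\pm$, based on the distributional identity $\nabla {\mathcal Q}^s_{\mu ;\partial\Omega }\boldsymbol\varphi ={\rm div}(2\mu\mathbb E({\bf V}_{\mu ;\partial\Omega }\boldsymbol\varphi ))$. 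The classical Ne\v{c}as inequality handles the bounded piece $\Omega _+$, while the exterior piece $\Omega _-$ demands its weighted counterpart in ${\mathcal H}^1(\Omega _-)$ (in the spirit of Alliot--Amrouche and Sayas--Selgas); this weighted pressure estimate is the technical heart of the argument.

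With $B$ bounded, symmetric, and coercive on the Hilbert space ${\mathcal X}$, the Lax--Milgram theorem produces, for every $\boldsymbol\psi \in H_{\boldsymbol\nu }^{\frac{1}{2}}(\partial\Omega )^3\cong{\mathcal X}^{\,*}$, a unique $\left[\!\left[\boldsymbol\varphi \right]\!\right]\in {\mathcal X}$ satisfying $B(\left[\!\left[\boldsymbol\varphi \right]\!\right],\cdot )=\langle \boldsymbol\psi ,\cdot \rangle _{\partial\Omega }$. The canonical duality then forces $\boldsymbol{\mathcal V}_{\mu ;\partial\Omega }\boldsymbol\varphi =\boldsymbol\psi $, and continuity of the inverse comes with the Lax--Milgram bound, closing the argument for \eqref{sl-v-isom}.
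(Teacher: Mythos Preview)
Your proposal is correct and follows the same overall Lax--Milgram strategy as the paper: show that the induced bilinear form is bounded and coercive on ${\mathcal X}=H^{-\frac{1}{2}}(\partial\Omega)^3/{\mathbb R}\boldsymbol\nu$, using the identification ${\mathcal X}^{\,*}=H_{\boldsymbol\nu}^{\frac{1}{2}}(\partial\Omega)^3$. Both arguments reduce coercivity to the key estimate $\|\left[\!\left[\boldsymbol\varphi\right]\!\right]\|_{\mathcal X}\le C\,\|{\bf V}_{\mu;\partial\Omega}\boldsymbol\varphi\|_{{\mathcal H}^1({\mathbb R}^3)^3}$, but the routes to this estimate differ.

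You obtain it by writing $\boldsymbol\varphi+c\boldsymbol\nu$ as a conormal-derivative jump, bounding via the continuity of ${\bf t}_\mu^\pm$, and then eliminating the residual pressure infimum through Ne\v{c}as-type inequalities on $\Omega_+$ and a weighted version on $\Omega_-$. This works --- the weighted exterior estimate follows by duality from the surjectivity of ${\rm div}:\mathring{\mathcal H}^1(\Omega_-)^3\to L^2(\Omega_-)$ --- but it brings in an extra layer of machinery. The paper instead bypasses the pressure entirely: to estimate $\langle\left[\!\left[\boldsymbol\varphi\right]\!\right],\boldsymbol\Phi\rangle_{\partial\Omega}$ for $\boldsymbol\Phi\in H_{\boldsymbol\nu}^{\frac{1}{2}}(\partial\Omega)^3$, it invokes the bounded right inverse of the trace $\gamma:{\mathcal H}^1_{\rm div}({\mathbb R}^3)^3\to H_{\boldsymbol\nu}^{\frac{1}{2}}(\partial\Omega)^3$ to produce a \emph{divergence-free} extension ${\bf w}=\gamma^{-1}\boldsymbol\Phi$. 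With ${\rm div}\,{\bf w}=0$, formula \eqref{jump-conormal-derivative-1} collapses to $\langle\boldsymbol\varphi,\boldsymbol\Phi\rangle_{\partial\Omega}=a_\mu({\bf u}_{\boldsymbol\varphi},{\bf w})$, and the pressure never appears. The trade-off: your route is self-contained once the Ne\v{c}as estimates are granted, while the paper's route is shorter and more direct but leans on the divergence-free trace surjectivity result (cited from Sayas--Selgas).
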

\begin{proof}
We use arguments similar to those for Proposition 5.5 in \cite{Sa-Se}. First, Lemma \ref{continuity-sl-S-h-var} and the membership relation \eqref{range-sl-v} imply that the linear operator in \eqref{sl-v-isom} is continuous.
We show that this operator is also $H^{-\frac{1}{2}}(\partial\Omega )^3\!/{\mathbb R}\!\boldsymbol\nu $-elliptic 
i.e., that there exists a constant $c=c(\partial \Omega )$ such that
\begin{align}
\label{H-elliptic-sl-v}
\left\langle \boldsymbol{\mathcal V}_{\mu ;\partial\Omega }\left[\!\left[\boldsymbol \varphi \right]\!\right],\left[\!\left[\boldsymbol \varphi \right]\!\right]\right\rangle _{\partial \Omega }\geq c\left\|\left[\!\left[\boldsymbol \varphi \right]\!\right]\right\|_{H^{-\frac{1}{2}}(\partial\Omega )^3/{{\mathbb R}\boldsymbol\nu}}^2,\ \forall \, \left[\!\left[\boldsymbol \varphi \right]\!\right]\in {H^{-\frac{1}{2}}(\partial\Omega )^3/{{\mathbb R}\boldsymbol\nu}}.
\end{align}
Let $\left[\!\left[\boldsymbol \varphi \right]\!\right]\!\in \!{H^{-\frac{1}{2}}(\partial\Omega )^3/{{\mathbb R}\boldsymbol\nu}}$. Thus, $\left[\!\left[\boldsymbol \varphi \right]\!\right]\!=\!\boldsymbol \varphi \!+\!{\mathbb R}\boldsymbol\nu $, where $\boldsymbol \varphi \in H^{-\frac{1}{2}}(\partial\Omega )^3$. In view of formula \eqref{jump-conormal-derivative-1}, Definition \ref{s-l-S-variational-variable}, relations \eqref{kernel-sl-v}, \eqref{range-sl-v}, and inequality \eqref{a-1-v2-S},
\begin{align}
\label{onto-div-2}
\!\!\!\!\!\!\left\langle \boldsymbol{\mathcal V}_{\mu ;\partial\Omega }(\left[\!\left[\boldsymbol \varphi \right]\!\right]),\left[\!\left[\boldsymbol \varphi \right]\!\right]\right\rangle _{\partial \Omega }&=\left\langle \boldsymbol{\mathcal V}_{\mu ;\partial\Omega }(\boldsymbol \varphi ),\boldsymbol \varphi \right\rangle _{\partial \Omega }\!=\!\langle \gamma {\bf u}_{\boldsymbol \varphi},[{\bf t}_{\mu }({\bf u}_{\boldsymbol \varphi},\pi_{\boldsymbol \varphi})]\rangle _{\partial \Omega }\nonumber\\
&=a_{\mu }({\bf u}_{\boldsymbol \varphi },{\bf u}_{\boldsymbol \varphi })\!\geq \!c_\mu^{-1}\|{\bf u}_{\boldsymbol \varphi }\|_{H^1({\mathbb R}^3)^3}^2,
\end{align}
where ${\bf u}_{\boldsymbol \varphi }={\bf V}_{\mu ;\partial\Omega }\boldsymbol \varphi $ and $\pi_{\boldsymbol\varphi}={\mathcal Q}^s_{{\mu ;\partial \Omega }}{\boldsymbol\varphi}$.
Now we use the property that the trace operator
\begin{align}
\label{onto-div}
\gamma :{\mathcal H}_{{\rm div}}^1({\mathbb R}^3)^3\to H_{\boldsymbol \nu }^{\frac{1}{2}}(\partial\Omega )^3
\end{align}
is surjective having a bounded right inverse $\gamma ^{-1}:H_{\boldsymbol \nu }^{\frac{1}{2}}(\partial\Omega )^3\to {\mathcal H}_{{\rm div}}^1({\mathbb R}^3)^3$ (cf., e.g., \cite[Proposition 4.4]{Sa-Se}). Hence, for any $\boldsymbol \Phi \in H_{\boldsymbol \nu }^{\frac{1}{2}}(\partial\Omega )^3$, we have that ${\bf w}=\gamma ^{-1}\boldsymbol \Phi \in {\mathcal H}_{{\rm div}}^1({\mathbb R}^3)^3.$ Then there exists $c'\equiv c'(\partial \Omega )\in (0,\infty )$ such that
\begin{align}
\label{estimate-norm}
|\langle \left[\!\left[\boldsymbol \varphi \right]\!\right],\boldsymbol \Phi \rangle _{\partial \Omega }|&=|\langle \boldsymbol \varphi ,\boldsymbol \Phi \rangle _{\partial \Omega }|=|\langle [{\bf t}_{\mu }({\bf u}_{\boldsymbol \varphi },\pi_{\boldsymbol \varphi })],\gamma {\bf w}\rangle _{\partial \Omega }|=|a_{\mu }({\bf u}_{\boldsymbol \varphi },{\bf w})|\\
&\!\leq \!2c_\mu \|{\bf u}_{\boldsymbol \varphi }\|_{{\mathcal H}^1({\mathbb R}^3)^3}\|\!\gamma ^{-1}\boldsymbol \Phi \|_{{\mathcal H}^1({\mathbb R}^3)^3}\!\!\leq \!2c_\mu c'\!\|{\bf u}_{\boldsymbol \varphi }\|_{{\mathcal H}^1({\mathbb R}^3)^3}\|\boldsymbol \Phi \|_{H^{\frac{1}{2}}(\partial\Omega )^3},\nonumber
\end{align}
where the first equality in \eqref{estimate-norm} follows from the relation $\left[\!\left[\boldsymbol \varphi \right]\!\right]=\boldsymbol \varphi +{\mathbb R}\boldsymbol \nu $ and the membership of $\boldsymbol \Phi $ in $H_{\boldsymbol \nu }^{\frac{1}{2}}(\partial\Omega )^3$, the second equality follows from Definition \ref{s-l-S-variational-variable}, 
and the third equality is a consequence of formula \eqref{jump-conormal-derivative-1}. 
Since the space $H_{\boldsymbol \nu }^{\frac{1}{2}}(\partial\Omega )^3$ is the dual of the space $H^{-\frac{1}{2}}(\partial\Omega )^3/{{\mathbb R}\nu}$, formula \eqref{estimate-norm} yields that
\begin{align}
\label{onto-div-1}
\|\left[\!\left[\boldsymbol \varphi \right]\!\right]\|_{H^{-\frac{1}{2}}(\partial\Omega )^3/{{\mathbb R}\boldsymbol \nu}}\leq 2c_\mu c'\|{\bf u}_{\boldsymbol \varphi }\|_{{\mathcal H}^{1}({\mathbb R}^3)^3}.
\end{align}
Then by \eqref{onto-div-2} and \eqref{onto-div-1} we obtain inequality \eqref{H-elliptic-sl-v}, and
the Lax-Milgram lemma yields that operator \eqref{sl-v-isom} is an isomorphism.
\end{proof}
\begin{rem}
\label{equal}
The fundamental solution of the constant-coefficient Stokes system in ${\mathbb R}^3$ is well known and leads to the construction of Newtonian and boundary layer potentials via the integral approach (see, e.g., \cite{Co,12,M-W,24}).
In view of Theorems \ref{Stokes-problem} and \ref{slp-var-S-1}, the Newtonian and single layer potentials provided by the variational approach (in the case $\mu \!=\!1$) coincide with classical ones expressed in terms of the fundamental solution, since they satisfy the same boundary value problems \eqref{Newtonian-S} and \eqref{var-Stokes-transmission-sl}, respectively (see also
\cite[Proposition 5.1]{Sa-Se} for $\mu \!=\!1$). The assumption $\mu =1$ is a particular case of a more general case of $L^{\infty}$ coefficients analyzed in this paper.
We also note that an alternative approach, reducing various boundary value problems for variable-coefficient elliptic partial differential equations to {\em boundary-domain integral equations}, by employing the explicit parametrix-based integral potentials, was explored in, e.g., \cite{CMN-1,Ch-Mi-Na,Ch-Mi-Na-3}.
\end{rem}

\section{Exterior Dirichlet problem for the Stokes system with $L^{\infty}$ coefficients}

In this section we analyze the exterior Dirichlet problem for the Stokes system with $L^{\infty}$ coefficients
\begin{equation}
\label{Dirichlet-var-Stokes}
\left\{
\begin{array}{ll}
{{\rm{div}}\left(2\mu {\mathbb E}({\bf u})\right)}-\nabla \pi ={\bf f} & \mbox{ in } \Omega _{-},
\\
{\rm{div}} \ {\bf u}=0 & \mbox{ in } \Omega _{-},\
\\
{\gamma }_{-}{\bf u}={\boldsymbol\phi} &  \mbox{ on } \partial \Omega ,
\end{array}\right.
\end{equation}
with given data $({\bf f},{\boldsymbol\phi})\in {\mathcal H}^{-1}(\Omega _{-})^3\times H^{\frac{1}{2}}(\partial \Omega )^3$.

\subsection{Variational approach}
First, we use a variational approach and show that problem \eqref{Dirichlet-var-Stokes} has a unique solution $({\bf u},\pi )\in {\mathcal H}^{1}(\Omega _{-})^3\times L^2(\Omega _{-})$ (see also \cite[Theorem 3.4]{Gi-Se} and \cite[Theorem 3.16]{Al-Am} for the constant-coefficient Stokes system).

\begin{thm}
\label{Dirichlet-ext-var-Stokes}
{Assume} that $\mu\in L^{\infty }(\Omega _{-})$ satisfies conditions \eqref{mu}.
Then for all given data $({\bf f},{\boldsymbol\phi})\in {\mathcal H}^{-1}(\Omega _{-})^3\times H^{\frac{1}{2}}(\partial \Omega )^3$ the exterior Dirichlet problem for the $L^{\infty }$ coefficient Stokes system \eqref{Dirichlet-var-Stokes}
is well posed. Hence problem \eqref{Dirichlet-var-Stokes} has a unique solution $({\bf u},\pi )\in {\mathcal H}^{1}(\Omega _{-})^3\times L^2(\Omega _{-})$ and there exists 
a constant $C\equiv C(\partial \Omega ;\mu )>0$ such that
\begin{align}
\label{estimate-Dirichlet-var-ext}
\|{\bf u}\|_{{\mathcal H}^{1}(\Omega _{-})^3}+\|\pi \|_{L^2(\Omega _{-})}\leq
C\left(\|{\bf f}\|_{{{\mathcal H}^{-1}(\Omega _{-})^3}}+\|\boldsymbol\phi \|_{H^{\frac{1}{2}}(\partial \Omega )^3}\right).
\end{align}
\end{thm}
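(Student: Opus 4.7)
The plan is to mimic the proof of Lemma~\ref{lemma-a47-1-Stokes}, but on the exterior domain $\Omega_-$ rather than $\mathbb{R}^3$, after first absorbing the Dirichlet data by an appropriate lifting.

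\emph{Step 1 (Lifting).} Using the bounded right inverse $\gamma_-^{-1}:H^{\frac{1}{2}}(\partial\Omega)^3\to{\mathcal H}^{1}(\Omega_-)^3$ of the exterior trace operator \eqref{ext-trace}, set ${\bf u}_0:=\gamma_-^{-1}\boldsymbol\phi$ and seek ${\bf u}$ in the form ${\bf u}={\bf u}_0+{\bf w}$. By \eqref{property} the boundary condition $\gamma_-{\bf u}=\boldsymbol\phi$ is equivalent to ${\bf w}\in\mathring{{\mathcal H}}^{1}(\Omega_-)^3$. Testing the momentum equation in \eqref{Dirichlet-var-Stokes} against ${\bf v}\in{\mathcal D}(\Omega_-)^3$ and the divergence-free condition against $q\in L^2(\Omega_-)$, and invoking density of ${\mathcal D}(\Omega_-)$ in $\mathring{{\mathcal H}}^{1}(\Omega_-)$, produces the equivalent mixed variational problem: find $({\bf w},\pi)\in\mathring{{\mathcal H}}^{1}(\Omega_-)^3\times L^2(\Omega_-)$ such that
\begin{align*}
a^-_\mu({\bf w},{\bf v})+b^-({\bf v},\pi)&=\tilde\ell({\bf v}),\quad\forall\,{\bf v}\in\mathring{{\mathcal H}}^{1}(\Omega_-)^3,\\
b^-({\bf w},q)&=\tilde g(q),\quad\forall\,q\in L^2(\Omega_-),
\end{align*}
where $a^-_\mu,\,b^-$ are the analogs of \eqref{a-v}--\eqref{b-v} with $\mathbb{R}^3$ replaced by $\Omega_-$, $\tilde\ell({\bf v}):=-\langle{\bf f},{\bf v}\rangle_{\Omega_-}-a^-_\mu({\bf u}_0,{\bf v})$, and $\tilde g(q):=b^-({\bf u}_0,q)$. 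Both $\tilde\ell$ and $\tilde g$ are bounded linear functionals thanks to the continuity of $\gamma_-^{-1}$, the identification $\mathring{\mathcal H}^{1}(\Omega_-)=\widetilde{\mathcal H}^{1}(\Omega_-)$ via $\mathring E_-$, and the definition of ${\mathcal H}^{-1}(\Omega_-)$ as $(\widetilde{\mathcal H}^{1}(\Omega_-))^*$.

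\emph{Step 2 (Brezzi hypotheses).} Boundedness of $a^-_\mu$ follows exactly as in \eqref{a-1-v-S}. For coercivity on $\mathring{{\mathcal H}}^{1}(\Omega_-)^3$, I would extend ${\bf v}\in\mathring{{\mathcal H}}^{1}(\Omega_-)^3$ by zero to $\mathring E_-{\bf v}\in{\mathcal H}^{1}(\mathbb{R}^3)^3$, apply Korn's inequality \eqref{Korn3-R3} to the extension, and then use the equivalence of the seminorm \eqref{seminorm} with the norm \eqref{weight-2} on ${\mathcal H}^{1}(\Omega_-)$, in complete parallel with \eqref{a-1-v2-S}. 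Boundedness of $b^-$ is obvious, and the inf-sup condition for $b^-$ reduces, via Lemma~\ref{surj-inj-inf-sup}(ii), to the surjectivity of ${\rm div}:\mathring{{\mathcal H}}^{1}(\Omega_-)^3\to L^2(\Omega_-)$, which I would invoke from exterior Bogovskii-type results such as \cite[Proposition 2.1]{Al-Am-1}. Applying Brezzi's theorem (Theorem~\ref{B-B}) with $X=\mathring{{\mathcal H}}^{1}(\Omega_-)^3$ and $M=L^2(\Omega_-)$ yields a unique pair $({\bf w},\pi)$ together with
\begin{equation*}
\|{\bf w}\|_{{\mathcal H}^{1}(\Omega_-)^3}+\|\pi\|_{L^2(\Omega_-)}\leq C\left(\|\tilde\ell\|_{(\mathring{{\mathcal H}}^{1}(\Omega_-)^3)^*}+\|\tilde g\|_{L^2(\Omega_-)}\right).
\end{equation*}

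\emph{Step 3 (Conclusion).} Undoing the lift, ${\bf u}:={\bf u}_0+{\bf w}\in{\mathcal H}^{1}(\Omega_-)^3$ solves \eqref{Dirichlet-var-Stokes}, and combining the previous bound with the continuity estimate $\|{\bf u}_0\|_{{\mathcal H}^{1}(\Omega_-)^3}\leq C\|\boldsymbol\phi\|_{H^{\frac{1}{2}}(\partial\Omega)^3}$ delivers \eqref{estimate-Dirichlet-var-ext}. Uniqueness follows by applying the uniqueness part of Brezzi's theorem to zero data. The step I expect to be the main obstacle is verifying the inf-sup condition: the surjectivity of ${\rm div}:\mathring{{\mathcal H}}^{1}(\Omega_-)^3\to L^2(\Omega_-)$ is more delicate than its $\mathbb{R}^3$ counterpart used in Lemma~\ref{lemma-a47-1-Stokes}, because one needs an ${\mathcal H}^{1}$-bounded right inverse producing fields that simultaneously vanish on $\partial\Omega$ and decay at infinity in the $\rho^{-1}$-weighted sense. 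Everything else is a routine transfer of the $\mathbb{R}^3$ proof to the exterior geometry.
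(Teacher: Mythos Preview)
Your argument is correct and follows essentially the same scheme as the paper: reduce to a mixed variational problem on $\mathring{\mathcal H}^{1}(\Omega_-)^3\times L^2(\Omega_-)$, verify boundedness and coercivity of $a^-_\mu$, verify the inf--sup condition for $b^-$, and apply Theorem~\ref{B-B}. The one substantive difference is the choice of lifting. The paper constructs a \emph{divergence-free} extension ${\bf u}_0$ of $\boldsymbol\phi$ (taking it to be the exterior Brinkman double-layer potential solving \eqref{Br1}), so that the reduced mixed problem has homogeneous second equation ($g=0$). You instead take the generic right inverse $\gamma_-^{-1}\boldsymbol\phi$, which forces you to carry a nonzero $\tilde g$ through Brezzi's theorem. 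Your route is more elementary---it avoids importing the Brinkman layer-potential machinery---and costs nothing, since Theorem~\ref{B-B} already allows $g\neq 0$. One small correction: the surjectivity of ${\rm div}:\mathring{\mathcal H}^{1}(\Omega_-)^3\to L^2(\Omega_-)$ that you need is not \cite[Proposition~2.1]{Al-Am-1} (that is the $\mathbb{R}^3$ statement used in Lemma~\ref{lemma-a47-1-Stokes}); the exterior-domain version is \cite[Theorem~3.2]{Gi-Se}, which is exactly what the paper invokes.
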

\begin{proof}
First, we note that the density of the space ${\mathcal D}(\Omega _{-})^3$
in $\widetilde{\mathcal H}^{1}(\Omega _{-})^3$
implies that the exterior Dirichlet problem \eqref{Dirichlet-var-Stokes} has the following equivalent variational formulation: Find $({\bf u},\pi )\in {{\mathcal H}^{1}(\Omega _{-})^3\times L^2(\Omega _{-})}$ such that
\begin{equation}
\label{variational-Dirichlet-Stokes}
\left\{\begin{array}{lll}
2\langle \mu {\mathbb E}({\bf u}),{\mathbb E}(\tilde{\bf v})\rangle _{\Omega _{-}}-\langle \pi ,{\rm{div}}\ \tilde{\bf v}\rangle _{\Omega _{-}}
 =-\langle {\bf f},\tilde{\bf v}\rangle _{\Omega _{-}},\ \forall \, \tilde{\bf v}\in
\widetilde{\mathcal H}^{1}(\Omega _{-})^3,\\
\langle {\rm{div}}\ {\bf u},q\rangle _{\Omega _{-}}=0,\ \forall \, q\in {L^2(\Omega _{-})},\\
{\gamma }_{-}({\bf u})={\boldsymbol\phi} \mbox{ on } \partial \Omega .
\end{array}\right.
\end{equation}
Next, we consider ${\bf u}_0\in {\mathcal H}^{1}(\Omega _{-})^3$ such that
\begin{align}
\label{div-trace}
\left\{\begin{array}{lll}
{\rm{div}}\, {\bf u}_0=0 & \mbox{ in } \Omega _{-},\\
\gamma _{-}{\bf u}_0={\boldsymbol\phi } & \mbox{ on } \partial \Omega .
\end{array}
\right.
\end{align}
Particularly, we can choose ${\bf u}_0$ as  the solution of the Dirichlet problem for
a constant-coefficient Brinkman system
\begin{align}
\label{Br1}
\left\{\begin{array}{lll}
(\triangle -\alpha {\mathbb I}){\bf u}_0-\nabla \pi _0=0,\
{\rm{div}}\, {\bf u}_0=0 & \mbox{ in } \Omega _{-},
\\
\gamma _{-}{\bf u}_0={\boldsymbol\phi } & \mbox{ on } \partial \Omega ,
\end{array}
\right.
\end{align}
where $\alpha >0$ is an arbitrary constant. The solution is given by the corresponding double layer potential
\begin{align}
\label{div-trace-1}
{\bf u}_0={\bf W}_{\alpha ;\partial \Omega }\left(\frac{1}{2}{\mathbb I}+{\bf K}_{\alpha ;\partial \Omega }\right)^{-1}{\boldsymbol\phi},
\end{align}
where ${\bf K}_{\alpha ;\partial \Omega }:H^{\frac{1}{2}}(\partial \Omega )^3\to H^{\frac{1}{2}}(\partial \Omega )^3$ is the corresponding Brinkman double-layer boundary potential operator. Note that
\begin{align}
\label{dl-velocity}
&({\bf W}_{\alpha }{\textbf h})_{j}({\bf x}):=\int_{\partial {\Omega}}S^{\alpha}_{ij\ell }({\bf x},{\bf y})\nu _{\ell}({\bf y}) h_{i}({\bf y})d\sigma _{\bf y}.
\end{align}
The explicit form of the kernel $S^{\alpha}_{ij\ell }({\bf x},{\bf y})$
can be found in \cite[(2.14)-(2.18)]{24} and \cite[Section 3.2.1]{12}.

In addition, the operator $\frac{1}{2}{\mathbb I}+{\bf K}_{\alpha ;\partial \Omega }:H^{\frac{1}{2}}(\partial \Omega )^3\to H^{\frac{1}{2}}(\partial \Omega )^3$ is an isomorphism, and ${\bf u}_0$ belongs to the space $H^{1}(\Omega _{-})^3$ and satisfies \eqref{Br1}, and hence \eqref{div-trace}.
Moreover, the embedding $H^{1}(\Omega _{-})^3\subset {\mathcal H}^{1}(\Omega _{-})^3$ shows that ${\bf u}_0$ belongs also to the space ${\mathcal H}^{1}(\Omega _{-})^3$ (see also \cite[Lemma 3.2, Remark 3.3]{Gi-Se}).

Then with the new variable $\mathring{\bf u}:={\bf u}-{\bf u}_0\in \mathring{\mathcal H}^{1}(\Omega _{-})^3$, the variational problem \eqref{variational-Dirichlet-Stokes} reduces to the following mixed variational formulation (c.f. Problem (Q) in p. 324 of \cite{Gi-Se} for the
constant-coefficient Stokes system): Find $(\mathring{\bf u},\pi )\in \mathring{\mathcal H}^{1}(\Omega _{-})^3\times L^2(\Omega _{-})$ such that
\begin{equation}
\label{variational-Dirichlet-2-Stokes}
\left\{\begin{array}{lll}
a_{{\mu ;\Omega _{-}}}(\mathring{\bf u},{\bf v})+b_{\Omega _{-}}({\bf v},\pi )=
{\mathfrak F}_{\mu ;{\bf u}_0}({\bf v}),\ \forall \, {\bf v}\in \mathring{\mathcal H}^{1}(\Omega _{-})^3,\\
b_{_{\Omega _{-}}}(\mathring{\bf u},q)=0,\ \forall \, q\in L^2(\Omega _{-}),
\end{array}\right.
\end{equation}
where $a_{{\mu ;\Omega _{-}}}:\mathring{\mathcal H}^1(\Omega _{-})^3\times \mathring{\mathcal H}^1(\Omega _{-})^3\to {\mathbb R}$ and $b_{_{\Omega _{-}}}:\mathring{\mathcal H}^1(\Omega _{-})^3\times L^2(\Omega _{-})\to {\mathbb R}$ are the bilinear forms given by
\begin{align}
\label{a-Omega}
&a_{{\mu ;\Omega _{-}}}({\bf w},{\bf v}):=2\langle \mu {\mathbb E}({\bf w}),{\mathbb E}({\bf v})\rangle _{\Omega _{-}},\ \forall \, {\bf v},{\bf w}\in \mathring{\mathcal H}^1(\Omega _{-})^3,\\
\label{b-Omega}
&b_{_{\Omega _{-}}}({\bf v},q):=-\langle {\rm{div}}\, {\bf v},q\rangle _{\Omega _{-}},\ \forall \, {\bf v}\in \mathring{\mathcal H}^1(\Omega _{-})^3\,, q\in L^2(\Omega _{-}),
\end{align}
and ${\mathfrak F}_{\mu ;{{\bf u}_0}}:\mathring{\mathcal H}^1(\Omega _{-})^3\to {\mathbb R}$ is the linear form given by
\begin{align}
\label{l-u-0-Stokes}
{\mathfrak F}_{\mu ;{\bf u}_0}({\bf v}):=-\left(\langle {\bf f},{\mathring E}{\bf v}\rangle _{\Omega _{-}}+2\langle \mu {\mathbb E}({\bf u}_0),{\mathbb E}({\bf v})\rangle _{\Omega _{-}}\right),\ \forall \, {\bf v}\in \mathring{\mathcal H}^{1}(\Omega _{-})^3.
\end{align}
Here we took into account that the spaces $\mathring{\mathcal H}^{1}(\Omega _{-})^3$ and $\widetilde{\mathcal H}^{1}(\Omega _{-})^3$ can be identified through the isomorphism
$\mathring E_{-}:\mathring{\mathcal H}^{1}(\Omega _{-})^3\to\widetilde{\mathcal H}^{1}(\Omega _{-})^3$.
Note that
\begin{align}
\mathring{\mathcal H}_{{\rm{div}}}^1(\Omega_{-})^3:
&=\left\{{\bf v}\in \mathring{\mathcal H}^1(\Omega_{-})^3: {\rm{div}}\, {\bf v}=0 \mbox{ in } \Omega _{-}\right\}\nonumber\\
&=\left\{{\bf v}\in \mathring{\mathcal H}^1(\Omega_{-})^3: b_{_{\Omega _{-}}}({\bf v},q)=0,\ \forall \, q\in L^2(\Omega _{-})\right\}.
\end{align}
Now, formula \eqref{weight-2}, inequality \eqref{mu} and the H\"{o}lder inequality yield that
\begin{align}
\label{a-1-v-d-Stokes}
|a_{{\mu ;\Omega _{-}}}({\bf v}_1,{\bf v}_2)|&\leq 2c_\mu\|{\mathbb E}({\bf v}_1)\|_{L^2(\Omega _{-})^{3\times 3}}\|{\mathbb E}({\bf v}_2)\|_{L^2(\Omega _{-})^{3\times 3}}\nonumber\\
&\leq 2c_\mu \|{\bf v}_1\|_{{\mathcal H}^1(\Omega _{-})^3}\|{\bf v}\|_{{\mathcal H}^1(\Omega _{-})^3},\ \forall\ {\bf v}_1,\, {\bf v}_2\in \mathring{\mathcal H}^1(\Omega _{-})^3.
\end{align}
Moreover, the formula
\begin{align}
\!\!\!\!\!2\|{\mathbb E}({\bf v})\|_{L^2(\Omega _{-})^{3\times 3}}^2\!=\!\|{\rm grad}\,{\bf v}\|_{L^2(\Omega_-)^{3\times 3}}^2\!+\!\|{\rm div}\,{\bf v}\|_{L^2(\Omega_-)}^2,\, \forall \, {\bf v}\in {\mathcal D}(\Omega _{-})^3
\end{align}
(cf., e.g., the proof of Corollary 2.2 in \cite{Sa-Se}), and the density of the space ${\mathcal D}(\Omega _{-})^3$ in $\mathring{\mathcal H}^1(\Omega_-)^3$ show that the same formula holds also for any function in $\mathring{\mathcal H}^1(\Omega_-)^3$. Therefore, we obtain the following Korn type inequality
\begin{align}
\label{2nd-Korn-exerior-cal}
\|{\rm grad}\,{\bf v}\|_{L^2(\Omega_-)^{3\times 3}}\leq 2^{\frac{1}{2}}\|\mathbb E ({\bf v})\|_{L^2(\Omega_-)^{3\times 3}}\,, \ \forall \, {\bf v}\in \mathring{\mathcal H}^1(\Omega_-)^3.
\end{align}

Then by using inequality \eqref{2nd-Korn-exerior-cal}, the equivalence of seminorm \eqref{seminorm} to the norm \eqref{weight-2} in the space $\mathcal H^1(\Omega_-)^3$, and assumption \eqref{mu} we deduce that there exists a constant $C=C(\Omega _{-})>0$ such that
\begin{align*}
\|{\bf u}\|_{{\mathcal H}^1(\Omega _{-})^3}^2&\leq C\|{\rm grad}\,{\bf u}\|_{L^2(\Omega_-)^{3\times 3}}^2\!\leq 2C\|{\mathbb E}({\bf u})\|_{L^2(\Omega _{-})^{3\times 3}}^2\nonumber\\
&\leq 2C{c_\mu }\|\mu {\mathbb E}({\bf u})\|_{L^2(\Omega _{-})^{3\times 3}}^2
=2C{c_\mu }a_{{\mu ;\Omega _{-}}}({\bf u},{\bf u}),\, \forall \, {\bf u}\in \mathring{\mathcal H}^1(\Omega _{-})^3,
\end{align*}
and accordingly that 
\begin{align}
\label{a-1-v2-e-Stokes}
a_{{\mu ;\Omega _{-}}}({\bf u},{\bf u})\geq \frac{1}{2Cc_\mu }\|{\bf u}\|_{{\mathcal H}^1(\Omega _{-})^3}^2,\ \forall \, {\bf u}\in \mathring{\mathcal H}^1(\Omega _{-})^3.
\end{align}
In view of inequalities \eqref{a-1-v-d-Stokes} and \eqref{a-1-v2-e-Stokes} it follows that the bilinear form $a_{{\mu ;\Omega _{-}}}(\cdot ,\cdot ):\mathring{\mathcal H}^1(\Omega _{-})^3\times \mathring{\mathcal H}^1(\Omega _{-})^3\to {\mathbb R}$ is bounded and coercive. Moreover, arguments similar to those for inequality \eqref{a-1-v-d-Stokes} imply that the bilinear form $b_{_{\Omega _{-}}}(\cdot ,\cdot):\mathring{\mathcal H}^1(\Omega _{-})^3\times L^2(\Omega _{-})\to {\mathbb R}$ and the linear form ${\mathfrak F}_{\mu ;{\bf u}_0}:\mathring{\mathcal H}^1({\mathbb R}^3)^3\to {\mathbb R}$ given by \eqref{b-Omega} and \eqref{l-u-0-Stokes}, are also bounded.
Since the operator
\begin{align}
\label{div-tilde-Stokes}
{\rm{div}}:\mathring{\mathcal H}^1(\Omega_{-})^3\to L^2(\Omega _{-})
\end{align}
is surjective (cf., e.g., \cite[Theorem 3.2]{Gi-Se}), then by Lemma \ref{surj-inj-inf-sup}, the bounded bilinear form $b_{_{\Omega _{-}}}(\cdot ,\cdot):\mathring{\mathcal H}^1(\Omega _{-})^3\times L^2(\Omega _{-})\to {\mathbb R}$ satisfies the inf-sup condition 
\begin{align}
\label{inf-sup-Omega-Stokes}
\inf _{q\in L^2(\Omega _{-})\setminus \{0\}}\sup _{{\bf v}\in \mathring{\mathcal H}^1(\Omega _{-})^3\setminus \{\bf 0\}}\frac{b_{_{\Omega _{-}}}({\bf v},q)}{\|{\bf v}\|_{\mathring{\mathcal H}^1(\Omega _{-})^3}\|q\|_{L^2(\Omega _{-})}}\geq \beta _D
\end{align}
with some constant $\beta _D>0$ (cf. \cite[{Theorem 3.3}]{Gi-Se}).
Then Theorem \ref{B-B} (with $X=\mathring{\mathcal H}^1(\Omega _{-})^3$ and $M=L^2(\Omega _{-})$) implies that the variational problem \eqref{variational-Dirichlet-2-Stokes} has a unique solution $(\mathring{\bf u},\pi )\in \mathring{\mathcal H}^1(\Omega _{-})^3\times L^2(\Omega _{-})$. Moreover, the pair $({\bf u},\pi )=(\mathring{\bf u}+{\bf u}_0,\pi )\in {\mathcal H}^1(\Omega _{-})^3\times L^2(\Omega _{-})$, where ${\bf u}_0\in {\mathcal H}^1(\Omega _{-})^3$ satisfies relations \eqref{div-trace}, is the unique solution of the mixed variational formulation \eqref{variational-Dirichlet-Stokes} and depends continuously on the given data $({\bf f},{\boldsymbol\phi})\in {\mathcal H}^{-1}(\Omega _{-})^3\times H^{\frac{1}{2}}(\partial \Omega )^3$. The equivalence between the variational problem \eqref{variational-Dirichlet-Stokes} and the exterior Dirichlet problem \eqref{Dirichlet-var-Stokes} shows that problem \eqref{Dirichlet-var-Stokes} is also well-posed, as asserted.
\end{proof}

\subsection{Potential approach}
Theorem \ref{Dirichlet-ext-var-Stokes} asserts the well-posedness of the exterior Dirichlet problem for the Stokes system with $L^{\infty}$ coefficients. However, if the given data $({\bf f},{\boldsymbol\phi})$ belong to the space ${\mathcal H}^{-1}(\Omega _{-})^3\times H_{\boldsymbol \nu }^{\frac{1}{2}}(\partial \Omega )^3$, then the solution can be
expressed in terms of the Newtonian and single layer potential and of the inverse of the single layer operator as follows (cf. \cite[Theorem 3.4]{Gi-Se} for $\mu>0$ constant, \cite[Theorem 10.1]{Fa-Me-Mi} and \cite[Theorem 5.1]{Lang-Mendez} for the Laplace operator).
\begin{thm}
\label{Dirichlet-ext-var-Stokes-new}
If ${\bf f}\in {\mathcal H}^{-1}(\Omega _{-})^3$ and ${\boldsymbol\phi}\in H_{\boldsymbol \nu }^{\frac{1}{2}}(\partial \Omega )^3$ then the exterior Dirichlet problem \eqref{Dirichlet-var-Stokes}
has a unique solution $({\bf u},\pi )\in {\mathcal H}^{1}(\Omega _{-})^3\times L^2(\Omega _{-})$, given by
\begin{align}
\label{solution-Dirichlet-ext-0}
&{\bf u}=\boldsymbol{\mathcal N}_{{\mu ;{\mathbb R}^3}}(\tilde{\bf f})|_{\Omega _{-}}+{\bf V}_{\mu ;\partial \Omega }\left(\boldsymbol{\mathcal V}_{\mu ;\partial \Omega }^{-1}\big({\boldsymbol\phi}-\gamma _{-}\big(\boldsymbol{\mathcal N}_{{\mu ;{\mathbb R}^3}}(\tilde{\bf f})\big)\big)\right),\\
\label{solution-Dirichlet-ext-p-0}
&\pi ={\mathcal Q}_{{\mu ;{\mathbb R}^3}}(\tilde{\bf f})|_{\Omega _{-}}+{\mathcal Q}_{\mu ;\partial \Omega }^s\left(\boldsymbol{\mathcal V}_{\mu ;\partial \Omega }^{-1}\big({\boldsymbol\phi}-\gamma _{-}\big(\boldsymbol{\mathcal N}_{{\mu ;{\mathbb R}^3}}(\tilde{\bf f})\big)\big)\right) \mbox{ in } \Omega _{-},
\end{align}
where $\tilde{\bf f}$ is an extension of ${\bf f}$ to an element of ${\mathcal H}^{1}({\mathbb R}^3)^3$.
\end{thm}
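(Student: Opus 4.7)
The plan is to verify directly that the pair $({\bf u},\pi )$ defined by \eqref{solution-Dirichlet-ext-0}--\eqref{solution-Dirichlet-ext-p-0} solves the boundary value problem \eqref{Dirichlet-var-Stokes}, and then invoke the uniqueness already established in Theorem \ref{Dirichlet-ext-var-Stokes}. Fix an arbitrary extension $\tilde{\bf f}\in {\mathcal H}^{-1}({\mathbb R}^3)^3$ of ${\bf f}$, and set
\[
({\bf u}_N,\pi_N):=\bigl(\boldsymbol{\mathcal N}_{\mu ;{\mathbb R}^3}\tilde{\bf f},\,{\mathcal Q}_{\mu ;{\mathbb R}^3}\tilde{\bf f}\bigr)\in {\mathcal H}^{1}({\mathbb R}^3)^3\times L^2({\mathbb R}^3).
\]
By Theorem \ref{Stokes-problem}, this pair satisfies the $L^\infty$-coefficient Stokes system in ${\mathbb R}^3$ with right-hand side $\tilde{\bf f}$; in particular, restricted to $\Omega_-$, it satisfies $\mathcal L_\mu({\bf u}_N,\pi_N)={\bf f}$ and $\mathrm{div}\,{\bf u}_N=0$ in $\Omega_-$.

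Next I would verify the compatibility condition needed to invert the single layer operator. Set $\boldsymbol\psi:=\boldsymbol\phi-\gamma_-{\bf u}_N\in H^{\frac12}(\partial\Omega)^3$. The assumption $\boldsymbol\phi\in H_{\boldsymbol\nu}^{\frac12}(\partial\Omega)^3$ gives $\langle\boldsymbol\nu,\boldsymbol\phi\rangle_{\partial\Omega}=0$. For the remaining term, ${\bf u}_N\in {\mathcal H}^{1}({\mathbb R}^3)^3$ implies $[\gamma\,{\bf u}_N]={\bf 0}$, so $\gamma_-{\bf u}_N=\gamma_+{\bf u}_N$; the divergence theorem applied in the bounded domain $\Omega_+$ together with $\mathrm{div}\,{\bf u}_N=0$ in ${\mathbb R}^3$ yields $\langle\boldsymbol\nu,\gamma_+{\bf u}_N\rangle_{\partial\Omega}=\langle 1,\mathrm{div}\,{\bf u}_N\rangle_{\Omega_+}=0$. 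Hence $\boldsymbol\psi\in H_{\boldsymbol\nu}^{\frac12}(\partial\Omega)^3$, and Lemma \ref{isom-sl-v} yields a unique class $\left[\!\left[\boldsymbol\varphi\right]\!\right]\in H^{-\frac12}(\partial\Omega)^3/{\mathbb R}\boldsymbol\nu$ with $\boldsymbol{\mathcal V}_{\mu;\partial\Omega}\left[\!\left[\boldsymbol\varphi\right]\!\right]=\boldsymbol\psi$.

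I would then pick any representative $\boldsymbol\varphi$ of this class and set
\[
({\bf u}_V,\pi_V):=\bigl({\bf V}_{\mu ;\partial \Omega }\boldsymbol\varphi,\,{\mathcal Q}_{\mu ;\partial \Omega }^s\boldsymbol\varphi\bigr).
\]
By relations \eqref{sl-n}, the velocity ${\bf u}_V$ is independent of the representative, and the restriction of $\pi_V$ to $\Omega_-$ is independent of the representative as well (since $\mathcal Q^s_{\mu;\partial\Omega}\boldsymbol\nu=-\chi_{\Omega_+}$ vanishes in $\Omega_-$), so formulas \eqref{solution-Dirichlet-ext-0}--\eqref{solution-Dirichlet-ext-p-0} are unambiguous. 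Theorem \ref{slp-var-S-1} gives $\mathcal L_\mu({\bf u}_V,\pi_V)={\bf 0}$ and $\mathrm{div}\,{\bf u}_V=0$ in ${\mathbb R}^3\setminus\partial\Omega$, so defining ${\bf u}:=({\bf u}_N+{\bf u}_V)|_{\Omega_-}$ and $\pi:=(\pi_N+\pi_V)|_{\Omega_-}$ gives a pair in ${\mathcal H}^1(\Omega_-)^3\times L^2(\Omega_-)$ (by Lemmas \ref{Newtonian-S-var-1} and \ref{continuity-sl-S-h-var}) satisfying the first two lines of \eqref{Dirichlet-var-Stokes}. The Dirichlet condition is verified by construction: $\gamma_-{\bf u}=\gamma_-{\bf u}_N+\boldsymbol{\mathcal V}_{\mu;\partial\Omega}\boldsymbol\varphi=\gamma_-{\bf u}_N+\boldsymbol\psi=\boldsymbol\phi$.

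Uniqueness of $({\bf u},\pi)$ in ${\mathcal H}^1(\Omega_-)^3\times L^2(\Omega_-)$ is exactly Theorem \ref{Dirichlet-ext-var-Stokes}, and this uniqueness also shows that the resulting $({\bf u},\pi)$ does not depend on the particular extension $\tilde{\bf f}$ chosen. The only genuinely delicate point in the argument is the verification that $\boldsymbol\phi-\gamma_-(\boldsymbol{\mathcal N}_{\mu;{\mathbb R}^3}\tilde{\bf f})$ lies in the orthogonal complement $H_{\boldsymbol\nu}^{\frac12}(\partial\Omega)^3$, which is what makes the inverse $\boldsymbol{\mathcal V}_{\mu;\partial\Omega}^{-1}$ in \eqref{solution-Dirichlet-ext-0}--\eqref{solution-Dirichlet-ext-p-0} meaningful via Lemma \ref{isom-sl-v}; everything else is an assembly of the mapping properties already proved in Section 4.
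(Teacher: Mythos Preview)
Your proposal is correct and follows exactly the route the paper intends: the paper's own proof is a single sentence citing Definition~\ref{Newtonian-S-var-variable} and Lemmas~\ref{continuity-sl-S-h-var}, \ref{jump-s-l}, \ref{isom-sl-v}, and you have simply written out the details that those citations encode, including the crucial verification that $\boldsymbol\phi-\gamma_-(\boldsymbol{\mathcal N}_{\mu;\mathbb R^3}\tilde{\bf f})\in H_{\boldsymbol\nu}^{1/2}(\partial\Omega)^3$ and the observation that the representative ambiguity in $H^{-1/2}(\partial\Omega)^3/\mathbb R\boldsymbol\nu$ is harmless in $\Omega_-$. Your invocation of Theorem~\ref{Dirichlet-ext-var-Stokes} for uniqueness (and hence independence from the choice of extension $\tilde{\bf f}$) is the natural way to close the argument.
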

\begin{proof}
The result follows from Definition \ref{Newtonian-S-var-variable} and Lemmas \ref{continuity-sl-S-h-var}, \ref{jump-s-l}, \ref{isom-sl-v}.
\end{proof}

\appendix
{
\section{Mixed variational formulations and their well-posedness property}
\label{B-B-theory}

Here we make a brief review of well-posedness results due to Babu\u{s}ka \cite{Babuska} and Brezzi \cite{Brezzi} for mixed variational formulations related to bounded bilinear forms in reflexive Banach spaces. We follow \cite[Section 2.4]{Ern-Gu}, \cite{Brezzi-Fortin}, \cite[\S 4]{Gi-Ra}.

Let $X$ and ${\mathcal M}$ be reflexive Banach spaces, and let $X^*$ and ${\mathcal M}^*$ be their dual spaces. Let $a(\cdot ,\cdot):X\!\times \!X\!\to \!{\mathbb R}$, $b(\cdot ,\cdot):X\!\times \!{\mathcal M}\!\to \! {\mathbb R}$ be {bounded bilinear forms}. 
Then we consider the following abstract mixed variational formulation.

{\it For $f\in X^{*}$, $g\in {\mathcal M}^{*}$ given, find a pair $(u,p)\in X\times {\mathcal M}$ such that}
\begin{equation}
\label{mixed-variational}
\left\{\begin{array}{ll}
a(u,v)+b(v,p)&=f(v), \ \ \forall \ v\in X,\\
b(u,q)&=g(q), \ \ \forall \ q\in {\mathcal M}.
\end{array}
\right.
\end{equation}
Let $A:X\to X^{*}$ be the bounded linear operator defined by
\begin{align}
\label{A}
\langle Av,w\rangle =a(v,w),\, \forall \, v,w\in X,
\end{align}
where $\langle \cdot ,\cdot \rangle :=\!_{X^{*}}\langle \cdot ,\cdot \rangle _X$ is the duality pairing of the dual spaces $X^{*}$ and $X$. We also use the notation $\langle \cdot ,\cdot \rangle $ for the duality pairing $_{{\mathcal M}^{*}}\langle \cdot ,\cdot \rangle _{\mathcal M}$. Let $B:X\to {\mathcal M}^{*}$ and $B^{*}:{\mathcal M}\to X^{*}$ be the bounded linear and transpose operators given by
\begin{align}
\label{B}
&\langle Bv,q\rangle =b(v,q),\ \langle v,B^*q\rangle =\langle Bv,q\rangle ,\, \forall \, v\in X,\, \forall \, q\in {\mathcal M}.
\end{align}
In addition, we consider the spaces
\begin{align}
\label{V}
&V:={\rm{Ker}}\, B=\left\{v\in X: b(v,q)=0,\ \forall \, q\in {\mathcal M}\right\},\\
&V^{\perp}:=\left\{T\in X^{*}: \langle T,v\rangle =0,\ \forall \, v\in V\right\}.
\end{align}
Then the following well-posedness result holds (cf., e.g., \cite[Theorem 2.34]{Ern-Gu}).
\begin{thm}
\label{B-N-B-new}
Let $X$ and ${\mathcal M}$ be reflexive Banach spaces, $f\in X^{*}$ and $g\in {\mathcal M}^{*}$, and $a(\cdot ,\cdot):X\times X\to {\mathbb R}$ and $b(\cdot ,\cdot):X\times {\mathcal M}\to {\mathbb R}$ be bounded bilinear forms.
Let $V$ be the subspace of $X$ defined by \eqref{V}.
Then the variational problem \eqref{mixed-variational} is well-posed if and only if
$a(\cdot ,\cdot )$ satisfies the conditions
\begin{equation}
\label{B-N-B1}
\left\{\begin{array}{lll}
\exists \ \lambda >0\ \mbox{ such that } \ \displaystyle\inf _{u\in V\setminus \{0\}}\sup_{v\in V\setminus \{0\}}\frac{a(u,v)}{\|u\|_X\|v\|_X}\geq \lambda ,\\
\{v\in V: a(u,v)=0,\ \forall \ u\in V\}=\{0\},
\end{array}\right.
\end{equation}
and $b(\cdot ,\cdot)$ satisfies the {\it inf-sup $(${\it Ladyzhenskaya-Babu\u{s}ka-Brezzi}$)$ condition},
\begin{align}
\label{inf-sup-sm}
\exists \, \beta >0\ \mbox{ such that } \ \inf _{q\in {\mathcal M}\setminus \{0\}}\sup_{v\in X\setminus \{0\}}\frac{b(v,q)}{\|v\|_X\|q\|_{\mathcal M}}\geq \beta .
\end{align}
Moreover, there exists a constant $C$ depending on $\beta $, $\lambda $ and the norm of $a(\cdot ,\cdot )$, such that the unique solution {$(u,p)\in {X}\times {\mathcal M}$} of \eqref{mixed-variational} satisfies the inequality
\begin{align}
\label{mixed-C}
{\|u\|_{X}+\|p\|_{{\mathcal M}}\leq C\left(\|f\|_{X^{*}}+\|g\|_{{\mathcal M}^{*}}\right).}
\end{align}
\end{thm}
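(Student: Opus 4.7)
The proof plan rests on translating the inf-sup condition on $b$ into operator-theoretic form and then reducing the coupled mixed system to two uncoupled problems on $V$ and $V^\perp$. The key preliminary fact is that the inf-sup condition \eqref{inf-sup-sm} is equivalent to the statement that $B: X/V \to \mathcal{M}^{*}$ is an isomorphism with $\|B^{-1}\|\leq \beta^{-1}$, or equivalently (in reflexive Banach spaces, via the closed range theorem) that $B^{*}: \mathcal{M}\to V^{\perp}$ is an isomorphism with $\|(B^{*})^{-1}\|\leq \beta^{-1}$. This is precisely the content of the lemma referenced in the paper as \texttt{surj-inj-inf-sup}, so I would invoke it as a black box.

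For sufficiency, I would proceed in three steps. First, using the surjectivity of $B$ provided by the inf-sup condition on $b$, choose $u_g\in X$ with $Bu_g=g$ and $\|u_g\|_X\leq \beta^{-1}\|g\|_{\mathcal{M}^{*}}$, and seek the solution in the form $u=u_g+u_0$ with $u_0\in V$; the constraint equation $b(u,q)=g(q)$ is then automatic. Second, test the first equation of \eqref{mixed-variational} against $v\in V$: since $b(v,p)=0$ for $v\in V$, we obtain the reduced problem
\begin{equation*}
a(u_0,v)=f(v)-a(u_g,v)=:F(v),\quad\forall\,v\in V,
\end{equation*}
which is an equation on the reflexive Banach space $V$ driven by $F\in V^{*}$. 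By the Banach-Nečas-Babuška theorem, conditions \eqref{B-N-B1} are exactly what is needed for this equation to admit a unique solution $u_0\in V$ with $\|u_0\|_X\leq \lambda^{-1}\|F\|_{V^{*}}$. Third, define the linear functional $\ell\in X^{*}$ by $\ell(v):=f(v)-a(u,v)$. By construction $\ell$ vanishes on $V$, so $\ell\in V^{\perp}$, and the isomorphism $B^{*}:\mathcal{M}\to V^{\perp}$ yields a unique $p\in\mathcal{M}$ with $B^{*}p=\ell$, i.e.\ $b(v,p)=f(v)-a(u,v)$ for all $v\in X$, which is the first line of \eqref{mixed-variational}. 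Combining the three norm bounds yields \eqref{mixed-C}.

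For necessity I would argue by contrapositive. If \eqref{inf-sup-sm} fails, then $B:X/V\to \mathcal{M}^{*}$ is not surjective, so some $g\in\mathcal{M}^{*}$ cannot be matched and \eqref{mixed-variational} has no solution for that $g$ (taking $f=0$). If the first part of \eqref{B-N-B1} fails, one builds a sequence $u_n\in V$ with $\|u_n\|_X=1$ but $\sup_{v\in V}a(u_n,v)/\|v\|_X\to 0$; testing \eqref{mixed-variational} with $g=0$ and $f=0$ against elements of $V$ reveals non-uniqueness or instability of $u$, contradicting well-posedness. The second part of \eqref{B-N-B1} translates to uniqueness on $V$ and similarly follows from well-posedness via the open mapping theorem.

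The main obstacle will be the clean use of the closed range theorem in the Banach-space (rather than Hilbert-space) setting to justify the equivalence of the inf-sup condition \eqref{inf-sup-sm} with the isomorphism $B^{*}:\mathcal{M}\to V^{\perp}$; this is where reflexivity of $X$ and $\mathcal{M}$ is essential, since one needs to identify $(X/V)^{*}$ with $V^{\perp}$ and to ensure that the range of $B$ (equivalently, of $B^{*}$) is closed. Everything else is bookkeeping: a clever decomposition $u=u_g+u_0$, an application of Banach-Nečas-Babuška on $V$, and a reconstruction of $p$ via $(B^{*})^{-1}$, with the three norm bounds combining linearly to give the final estimate.
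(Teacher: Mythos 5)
Your proposal is essentially correct, but note that the paper does not prove Theorem \ref{B-N-B-new} at all: it is quoted verbatim from the literature (Ern--Guermond, Theorem 2.34), and the only ingredient the paper makes explicit is Lemma \ref{surj-inj-inf-sup}, which is exactly the operator-theoretic reformulation of \eqref{inf-sup-sm} that you invoke as a black box. Your argument is the standard textbook proof of this result and all of its steps go through: the lifting $u=u_g+u_0$ with $Bu_g=g$, the reduced Banach--Ne\v{c}as--Babu\v{s}ka problem on the reflexive closed subspace $V$, the recovery of $p$ from $\ell=f-a(u,\cdot)\in V^{\perp}$ via the isomorphism $B^{*}:{\mathcal M}\to V^{\perp}$, and the identification $(X/V)^{*}\cong V^{\perp}$ together with the closed range theorem, which is indeed where reflexivity enters. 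Two small points deserve care in a written-out version. First, the isomorphism $B:X/V\to{\mathcal M}^{*}$ gives you a coset of quotient norm at most $\beta^{-1}\|g\|_{{\mathcal M}^{*}}$, so to get a genuine representative $u_g\in X$ with a comparable norm you must either select one within a factor $(1+\varepsilon)$ of the infimum or use a bounded right inverse; this only affects the constant in \eqref{mixed-C}. Second, in the necessity direction the three conditions are not independent: to show that well-posedness forces \eqref{B-N-B1} you need to pass from the mixed problem to the reduced problem on $V$, and both the solvability of the reduced problem for arbitrary $F\in V^{*}$ (via Hahn--Banach extension of $F$ to $X^{*}$ and taking $g=0$) and the uniqueness argument (producing a pressure $p$ with $b(v,p)=-a(u,v)$ from a nontrivial $u\in V$ annihilating $a(\cdot,v)$ on $V$) require that the inf-sup condition \eqref{inf-sup-sm} has already been established; so the necessity proof should be ordered with \eqref{inf-sup-sm} first. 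Neither point is a gap in the idea, only in the bookkeeping.
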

\noindent In addition, we have (see \cite[Theorem A.56, Remark 2.7]{Ern-Gu}, \cite[Theorem 2.7]{Amrouche-2}).
\begin{lem}
\label{surj-inj-inf-sup}
Let $X,{\mathcal M}$ be reflexive Banach spaces. Let $b(\cdot ,\cdot):X\times {\mathcal M}\to {\mathbb R}$ be a bounded bilinear form. Let $B\!:\!X\!\to \!{\mathcal M}^{*}$ and $B^{*}\!:\!{\mathcal M}\!\to \!X^{*}$ be the operators defined by \eqref{B}, 
and let $V\!=\!{\rm{Ker}}\, B$. Then the following results are equivalent:
\begin{itemize}
\item[$(i)$]
There exists a constant $\beta >0$ such that $b(\cdot ,\cdot)$ satisfies 
condition \eqref{inf-sup-sm}.
\item[$(ii)$]
$B:{X/V}\to {\mathcal M}^{*}$ is an isomorphism and
$\|Bw\|_{{\mathcal M}^{*}}\!\geq \!\beta \|w\|_{X/V}$ for any $w\!\in \!X/V.$
\item[$(iii)$]
$B^{*}\!:\!{\mathcal M}\!\to \!V^\perp$ is an isomorphism and $\|B^{*}q\|_{X^{*}}\!\geq \!\beta \|q\|_{\mathcal M}$ for any $q\in {\mathcal M}.$
\end{itemize}
\end{lem}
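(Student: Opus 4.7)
\medskip

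\noindent\textbf{Proposal for the proof of Lemma \ref{surj-inj-inf-sup}.}
The plan is to recognise that all three statements are quantitative reformulations of the same closed-range/open-mapping principle, and to chain the implications through the canonical quotient operator induced by $B$. First I would rewrite the inf-sup quantity in (i) in the form
\begin{equation*}
\sup_{v\in X\setminus\{0\}}\frac{b(v,q)}{\|v\|_X}
=\sup_{v\in X\setminus\{0\}}\frac{\langle v,B^{*}q\rangle}{\|v\|_X}
=\|B^{*}q\|_{X^{*}},
\end{equation*}
so that (i) is literally equivalent to $\|B^{*}q\|_{X^{*}}\geq \beta\|q\|_{\mathcal M}$ for every $q\in {\mathcal M}$. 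Since the lower bound forces $B^{*}$ to be injective and to have closed range, only the identification of its image with $V^{\perp}$ is non-trivial; this is exactly the closed-range theorem applied to the bounded operator $B$, which gives $\overline{\operatorname{Range} B^{*}}=(\ker B)^{\perp}=V^{\perp}$. Combined with closedness of the range, this yields $\operatorname{Range} B^{*}=V^{\perp}$ and, by the open mapping theorem, (iii).

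Next I would pass from (iii) to (ii) through the quotient operator $\widetilde B:X/V\to {\mathcal M}^{*}$ defined by $\widetilde B[w]:=Bw$, which is well-defined, bounded, and injective because $V=\ker B$. The key identification is the isometric isomorphism $(X/V)^{*}\cong V^{\perp}$, valid because $X$ is reflexive, under which the Banach adjoint $\widetilde B^{*}:{\mathcal M}\to (X/V)^{*}$ coincides with $B^{*}$ viewed as a map into $V^{\perp}$; in particular $\|\widetilde B^{*}q\|_{(X/V)^{*}}=\|B^{*}q\|_{X^{*}}$. Applying the duality between surjectivity and boundedness from below (closed range theorem again, this time to $\widetilde B$), we get that $\widetilde B^{*}$ is bounded below by $\beta$ if and only if $\widetilde B$ is surjective with $\|\widetilde B[w]\|_{\mathcal M^{*}}\geq \beta\|[w]\|_{X/V}$, i.e.\ (iii)$\Leftrightarrow$(ii). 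Combined with the previous paragraph this closes the triangle (i)$\Leftrightarrow$(ii)$\Leftrightarrow$(iii).

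The main obstacle, and the only point where care is required, is the quantitative bookkeeping: the three conditions must all share the same constant $\beta$, so I must ensure that the closed-range/open-mapping arguments preserve the constant. This is why I prefer to work directly with the duality identity $\|B^{*}q\|_{X^{*}}=\sup_{v}b(v,q)/\|v\|_X$ (which preserves $\beta$ tautologically), and with the isometric identification $(X/V)^{*}\cong V^{\perp}$ (which also preserves $\beta$). For the step (iii)$\Rightarrow$(ii) I would use the standard fact that if $\widetilde B^{*}$ satisfies $\|\widetilde B^{*}q\|\geq \beta\|q\|$, then $\widetilde B$ is surjective and the solution map satisfies $\inf\{\|w\|_{X/V}:\widetilde B[w]=g\}\leq \beta^{-1}\|g\|_{\mathcal M^{*}}$, giving the same $\beta$ in (ii). Reflexivity of both $X$ and $\mathcal M$ is essential here, as it is what makes the double-dual identifications and the closed-range theorem yield genuine isomorphisms rather than merely dense range.
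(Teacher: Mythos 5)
Your argument is correct, but note that the paper does not actually prove Lemma \ref{surj-inj-inf-sup}: it is quoted from the literature (Ern--Guermond, Theorem A.56 and Remark 2.7, and Amrouche--Meslameni, Theorem 2.7), so there is no in-paper proof to compare against. Your proof is the standard one that those references employ, and it is essentially complete: the tautological identity $\sup_{v\neq 0}b(v,q)/\|v\|_X=\|B^{*}q\|_{X^{*}}$ reduces \eqref{inf-sup-sm} to the lower bound in $(iii)$; the lower bound gives injectivity and closed range of $B^{*}$; the closed-range theorem identifies the range with $V^{\perp}$; and the isometry $(X/V)^{*}\cong V^{\perp}$ together with the surjectivity/bounded-below duality transports the statement to the quotient operator, yielding $(ii)$ with the same constant $\beta$ because all identifications involved are isometric. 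Two small points of bookkeeping: the identification $(X/V)^{*}\cong V^{\perp}$ holds for arbitrary Banach spaces and does not use reflexivity; where reflexivity of $X$ genuinely enters is in upgrading $\overline{\operatorname{Range}B^{*}}^{\,w*}=(\ker B)^{\perp}$ to the norm-closure statement you use (norm-closed subspaces of $X^{*}$ are weak-$*$ closed only when $X$ is reflexive), and reflexivity of ${\mathcal M}$ is what lets you identify the Banach adjoint of $\widetilde B$, a priori defined on ${\mathcal M}^{**}$, with $B^{*}$ on ${\mathcal M}$. Since you explicitly flag reflexivity as essential and your closedness of the range is obtained independently from the lower bound, these are presentational refinements rather than gaps.
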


\begin{rem}
\label{Hilbert-structure}
Let $X$ be a reflexive Banach space and {$V$ be a} closed subspace of $X$. If {a} bounded bilinear form $a(\cdot ,\cdot ):V\times V\to {\mathbb R}$ is {\it coercive} on $V$, i.e., there exists a constant $c_a>0$ such that
\begin{align}
\label{coercive}
a(w,w)\geq c_a\|w\|_X^2,\ \forall \, w\in V,
\end{align}
then the conditions \eqref{B-N-B1} are satisfied as well $($see, e.g., \cite[Lemma 2.8]{Ern-Gu}$)$.
\end{rem}
\noindent The next result known as the {\it Babu\u{s}ka-Brezzi theorem} is the version of Theorem \ref{B-N-B-new} for Hilbert spaces (see \cite{Babuska}, \cite[Theorems 0.1, 1.1, Corollary 1.2]{Brezzi}).
\begin{thm}
\label{B-B}
Let $X$ and ${\mathcal M}$ be two real Hilbert spaces. Let $a(\cdot ,\cdot):X\times X\to {\mathbb R}$ and $b(\cdot ,\cdot):X\times {\mathcal M}\to {\mathbb R}$ be bounded bilinear forms. Let $f\in X^{*}$ and $g\in {\mathcal M}^{*}$. Let $V$ be the subspace of $X$ defined by \eqref{V}. Assume that $a(\cdot ,\cdot ):V\times V\to {\mathbb R}$ is coercive and that $b(\cdot ,\cdot):X\times {\mathcal M}\to {\mathbb R}$ satisfies
the inf-sup condition \eqref{inf-sup-sm}.
{Then the variational problem \eqref{mixed-variational} is well-posed.}
\end{thm}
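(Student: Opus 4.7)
My plan is to prove Theorem \ref{B-B} as a direct consequence of Lemma \ref{surj-inj-inf-sup}, the Lax--Milgram lemma on the closed subspace $V$, and a standard two-step lifting argument. Because $X$ and $\mathcal M$ are Hilbert spaces, coercivity of $a(\cdot,\cdot)$ on the closed subspace $V$ is a (strong) form of the Banach--Ne\v{c}as--Babu\v{s}ka condition \eqref{B-N-B1} on $V$ (cf. Remark \ref{Hilbert-structure}), so Theorem \ref{B-B} is essentially a specialization of Theorem \ref{B-N-B-new}; however, I will give a self-contained argument.

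The first step is to reduce the saddle-point problem to a coercive problem on $V$. Since $b(\cdot,\cdot)$ satisfies the inf-sup condition \eqref{inf-sup-sm}, Lemma \ref{surj-inj-inf-sup}(ii) implies that $B:X/V\to{\mathcal M}^*$ is an isomorphism. Hence, for the given $g\in{\mathcal M}^*$, there exists $u_0\in X$ with $Bu_0=g$ and
\begin{align*}
\|u_0\|_{X/V}\leq \beta^{-1}\|g\|_{{\mathcal M}^*}.
\end{align*}
Writing the sought velocity as $u=u_0+w$ with $w\in V$, the second equation in \eqref{mixed-variational} is automatically satisfied, and restricting the first equation to test functions $v\in V$ (for which $b(v,p)=\langle Bv,p\rangle=0$) reduces the system to: find $w\in V$ such that
\begin{align*}
a(w,v)=f(v)-a(u_0,v),\quad \forall\, v\in V.
\end{align*}
Since $V$ is a closed subspace of the Hilbert space $X$ and $a(\cdot,\cdot)$ is bounded and coercive on $V$, the Lax--Milgram lemma yields a unique $w\in V$, with $\|w\|_X\leq c_a^{-1}(\|f\|_{X^*}+M_a\|u_0\|_X)$, where $c_a$ is the coercivity constant and $M_a$ the continuity constant of $a$.

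The second step is to recover $p$. Define the residual functional $\ell\in X^*$ by $\ell(v):=f(v)-a(u_0+w,v)$. By construction $\ell$ vanishes on $V$, so $\ell\in V^{\perp}$. By Lemma \ref{surj-inj-inf-sup}(iii), the operator $B^*:{\mathcal M}\to V^{\perp}$ is an isomorphism, hence there exists a unique $p\in{\mathcal M}$ with $B^*p=\ell$, which is exactly $b(v,p)=\ell(v)$ for all $v\in X$, i.e., the first equation in \eqref{mixed-variational}. Moreover, $\|p\|_{\mathcal M}\leq \beta^{-1}\|\ell\|_{X^*}$. For uniqueness, a homogeneous solution $(u,p)$ would satisfy $u\in V$ and $a(u,v)+b(v,p)=0$ for all $v\in X$; taking $v=u\in V$ kills the $b$-term and coercivity forces $u=0$, after which $B^*p=0$ and the injectivity part of Lemma \ref{surj-inj-inf-sup}(iii) gives $p=0$. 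Chaining the estimates on $u_0$, $w$, and $p$ yields \eqref{mixed-C} with $C$ depending only on $\beta$, $c_a$, and $M_a$.

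The main conceptual obstacle is the correct use of Lemma \ref{surj-inj-inf-sup}: part (ii) is needed to lift the divergence-type datum $g$ to an element $u_0\in X$ modulo $V$, while part (iii) is needed to recover $p$ from a functional on $X$ known to annihilate $V$. Once both isomorphism statements are invoked, the remainder is an application of Lax--Milgram on the closed subspace $V$ (which, being a closed subspace of a Hilbert space, is itself a Hilbert space, so no extra structure beyond the Hilbert setting is needed). I do not foresee a separate technical difficulty, since the inf-sup hypothesis encapsulates both the surjectivity (existence of $u_0$) and the injectivity with closed range (existence of $p$) required by the argument.
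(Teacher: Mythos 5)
Your proof is correct, but note that the paper does not actually prove Theorem \ref{B-B}: it is quoted as the classical Babu\v{s}ka--Brezzi theorem with citations to \cite{Babuska} and \cite{Brezzi}, and the paper's implicit route to it is via Theorem \ref{B-N-B-new} together with Remark \ref{Hilbert-structure} (coercivity on $V$ implies the conditions \eqref{B-N-B1}). What you supply instead is the standard self-contained constructive argument: lift $g$ through the isomorphism $B:X/V\to{\mathcal M}^*$ of Lemma \ref{surj-inj-inf-sup}(ii), solve the reduced coercive problem on $V$ by Lax--Milgram, and recover $p$ from the residual functional via the isomorphism $B^*:{\mathcal M}\to V^\perp$ of Lemma \ref{surj-inj-inf-sup}(iii); the uniqueness and stability arguments are also the standard ones. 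This buys a genuinely self-contained proof where the paper only cites, at the cost of re-deriving what Theorem \ref{B-N-B-new} already packages. One small point to tidy: after obtaining $\|u_0\|_{X/V}\leq\beta^{-1}\|g\|_{{\mathcal M}^*}$ you later use $\|u_0\|_X$ in the estimate for $w$, so you should fix a representative of the coset whose $X$-norm is controlled by the quotient norm --- in the Hilbert setting simply take $u_0$ in the orthogonal complement of $V$, for which $\|u_0\|_X=\|u_0\|_{X/V}$; with that choice the chained estimates do yield \eqref{mixed-C} with $C=C(\beta,c_a,M_a)$ as claimed.
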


\section*{\bf Acknowledgements}
The research has been supported by the grant EP/M013545/1: "Mathematical Analysis of Boundary-Domain Integral Equations for Nonlinear PDEs" from the EPSRC, UK. 
Part of this work was done in April/May 2018, when M. Kohr visited the Department of Mathematics of the University of Toronto. She is grateful to the members of this department for their hospitality.

\end{document}